\documentclass[a4paper,12pt]{article}
\usepackage{parskip}
\usepackage[normalem]{ulem}
\usepackage{amsfonts,amsmath,amssymb,xcolor,hyperref,amsthm,graphicx,comment,enumerate,stmaryrd,color}
\usepackage{tikz}
\usetikzlibrary{positioning}

\title{Discrete averaging for discrete time dynamical systems}

\author{V.~Gelfreich$^1$ and A.~Vieiro$^{2,3}$\\[4pt]
$^1$ \small Mathematics Institute, University of Warwick, \small Coventry CV4 7AL, UK\\
\small{\tt v.gelfreich@warwick.ac.uk}\\[4pt]
$^2$ \small Departament de Matem\`atiques i Inform\`atica,\\ \small Universitat de Barcelona, Gran Via 585, 08007 Barcelona, Spain\\
$^3$ \small Centre de Recerca Matem\`atica (CRM),\\ \small Campus Bellaterra, 08193 Bellaterra, Spain\\
\small{\tt vieiro@maia.ub.es}
}

\providecommand{\keywords}[1]
{
  \small	
  \textbf{\textit{Keywords:}} #1
}

\newcommand{\N}{\mathbb{N}}
\newcommand{\R}{\mathbb{R}}
\newcommand{\C}{\mathbb{C}}
\newcommand{\Z}{\mathbb{Z}}

\newcommand\id{\operatorname{id}}

\newtheorem{thm}{Theorem}
\newtheorem{lemma}[thm]{Lemma}
\newtheorem{remark}[thm]{Remark}

\begin{document}

\maketitle	
\begin{abstract}
 In this paper we develop the theory of discrete averaging  designed to study discrete time dynamical systems defined by iterates of a map. The discrete averaging uses weighted averages over a segment of trajectory to find an autonomous vector field that approximates the original map. 
The method provides a simple and effective tool for finding adiabatic invariants, both numerically and analytically.
It is capable of strengthening  various theorems of the classical averaging theory because it eliminates two intermediate steps used in the classical averaging: the suspension procedure that assigns a rapidly oscillating flow to the map and time-dependent coordinate changes that eliminate the dependence on time.  
We  discuss  two applications of the discrete averaging --- to  the dynamics of a near-identity map and to the dynamics of a map in a neighbourhood of a resonant fixed point.
We show that the discrete averaging provides explicit uniform   bounds
for approximation errors. We also show that the discrete averaging
can be used to establish domain of validity of adiabatic approximations
in numerical experiments.
\end{abstract}

\keywords{near-identity maps, discrete averaging, embedding a map into a flow}

\section{Introduction}

In Dynamical Systems there are two parallel and, to a large extent, equivalent perturbative theories: one concerns rapidly oscillating vector fields, while the other addresses near-identity maps. For flows, a powerful collection of analytic tools has been developed on the basis of averaging methods \cite{BM1961,Verhulst2023,Verhulst2026,SVM2007}. By contrast, numerical investigations and visualization of dynamics are often performed  in terms of maps. The transition from a flow to a map is achieved via a Poincaré section, whereas the inverse transition relies on a suspension construction that represents the map as the time-one map of a non-autonomous time-periodic vector field.

Averaging theory has numerous applications in the study of resonant phenomena across a broad range of disciplines, including classical mechanics—particularly in problems of stability and Arnold diffusion—plasma physics, and singular perturbation theory, among others. Its effectiveness frequently rests on the existence and properties of adiabatic invariants coming from the study of averaged systems. From a practical perspective, it is essential to obtain either analytic or numerical expressions for these invariants and to determine domains in which they provide accurate approximations of the dynamics.

In plasma physics, the averaging theory for near-identity maps and  the corresponding adiabatic invariants are routinely used for analysis of accelerator beam dynamics~\cite{DEF2004}.
For example, 
the paper  \cite{BFGH14} presents the analysis of beam splitting   that relies on an adiabatic invariant associated with a return map to control trapping into resonances. In this context, the conservative Hénon map serves as the simplest standard  model. Of particular interest is the bifurcation near the 1:4 resonance, which produces large trapping regions due to a degeneracy in the model \cite{Bir87}. In \cite{BFGH14}, the adiabatic invariant was computed through a transformation to a higher-order normal form. Using this study as a motivating example,
 we show that the discrete averaging procedure  allows one to determine the adiabatic invariant directly in the original coordinates, without recourse to normal form transformations, and to carry out a systematic analysis of the domain of validity of the adiabatic approximation.

The standard construction of an approximating vector field proceeds in two stages. First, a near-identity map is interpreted as the time-one map of a non-autonomous ordinary differential equation with rapidly oscillating time dependence. Second, successive near-identity coordinate changes (averaging transformations) are employed to eliminate the time dependence order by order, yielding an autonomous vector field whose time-one flow approximates the original map. An alternative direct approach is based on expansions in powers of a small parameter $\varepsilon$, combined with a sequence of transformations removing the time dependence order by order in $\varepsilon$. This semi-algebraic formal procedure is often referred to as the Takens process \cite{Tak74}.

In general, a near-to-identity map cannot be embedded exactly into a flow \cite{Neishtadt84}. Consequently, the procedures described above typically lead to divergent series for the approximating vector field. The accuracy of an optimal truncation depends both on the distance from the identity and on the regularity of the system. Neishtadt \cite{Neishtadt84} proved that an analytic family of maps tangent to the identity can be embedded into a family of autonomous flows up to an exponentially small error.

The classical averaging approach has intrinsic limitations, as it relies on coordinate transformations that are not always amenable to precise analytic control or efficient numerical implementation. In this paper, we develop an alternative method based on weighted averages of iterates of the map. It generalizes the method introduced in \cite{GelfreichV18,GelfreichV26}.

Discrete averaging is particularly advantageous in numerical simulations, since its implementation is straightforward and computationally efficient. For instance, in \cite{GelfreichV18}, interpolating vector fields were used to visualize the dynamics of four-dimensional symplectic maps via a discrete-time analogue of Poincaré sections, combined with effective computations of adiabatic invariants in the original coordinates. 

The discrete averaging procedure provides explicit expressions for the approximating vector fields and yields a direct mechanism for controlling the approximation error. 
Such estimates were instrumental in the proof of a discrete version of the Nekhoroshev theorem in \cite{GelfreichV26}.
 In this paper, we derive explicit bounds for  error terms in the approximation by the time-one flow map. As a consequence, we obtain a refined version of Neishtadt’s theorem with quantitative estimates. Unlike the original result, our assumptions do not require the map to be a member of a
tangent-to-identity family and the approximation error is explicitly controlled by the ratio $\delta/\epsilon$, where $\epsilon$ measures the distance to the identity in a complex $\delta$-neighbourhood of the domain. 

Finally, the explicit construction of the Hamiltonian associated with a symplectic tangent-to-identity family of maps provides a systematic framework for the analysis of bifurcations of fixed points. It suffices to iterate the jet of the map (with respect to the phase variables and parameters) in the original coordinates; discrete averaging then yields the corresponding jet of a Hamiltonian vector field. In particular, near a  fully resonant point, a suitable iterate of the map becomes near-identity, and the symmetries obtained via normal form reduction are reflected in the invariance properties of the Hamiltonian under iterates of the map. 

The paper is organized as follows: in Section~\ref{Se:discreteav} we describe the 
definition of an interpolating vector field used in discrete averaging
and compare its definition with the classical averaging. Then we apply 
the discrete averaging to the study of the degenerate 1:4 resonance in the conservative Hénon map. In Section~\ref{Se:families} we establish the decay rate for approximation
of a tangent to the identity family of maps by a family of interpolating flows. 
In Section~\ref{Se:neishtadt} we prove uniform bounds for the approximation error for analytic maps and deduce the refined version of Neishtadt's theorem. 
In Section~\ref{Se:fixedpoint} we apply discrete averaging to obtain 
approximation for analytic tangent to identity maps in a neighbourhood of the fixed point. 
Finally, in Section~\ref{Se:stability} to prove exponentially long stability times for a symplectic map near
a fully resonant fixed point.

\section{Discrete averaging\label{Se:discreteav}}
\subsection{Classical averaging vs discrete averaging}

The classical averaging is usually used to study a system of differential equation
having two different time scales. The difference in time scales can originate
from an explicit dependence on a parameter of from scaling of a part of the phases space.
Hence, the classical applications of the averaging theory include the pendulum with a rapidly oscillating 
suspension point
\[
\ddot x=(1+a\sin(t/\varepsilon))\sin x
\]
and the pendulum with slowly changing frequency
\[
\ddot x=\omega(\varepsilon t)\sin x.
\]
In these examples $\varepsilon$ is a small parameter that characterizes
the ratio of timescales.

In general the averaging theory studies systems that can be written in the form
\begin{equation}\label{Eq:slow-fast}
\dot x=a(x,\varphi),\qquad \varepsilon \dot \varphi =b(x,\varphi),
\end{equation}
where $a$ and $b$ are periodic in $\varphi$ with a period $T>0$.
If $b\ge c>0$, the implicit function theorem (IFT) can be used to exclude the rapid phase $\varphi$ from the equation
and reduce the study to a non-autonomous equation of the form
\[
x'=\varepsilon f(x,\tau)
\]
with a non-autonomous time-periodic  vector field $f$. The solution of the averaged equation 
\[
y'=\varepsilon\bar f(y),\qquad\text{where}\quad \bar f(y)=\frac{1}{T}\int_0^Tf(y,\tau)d\tau,
\]
gives the leading order approximation for  trajectories
starting from a point $x_0$ at $\tau=0$: if  $x(0,x_0)=y(0,x_0)=x_0$ then
\[
x(\tau,x_0)=y(\tau,x_0)+O(\varepsilon)\qquad\text{for $|\tau|\le C/\varepsilon$}.
\]
The averaged equation is autonomous. Its integrals of motion play the role of
adiabatic invariants for the original system as they change slowly in time. 
The analysis of adiabatic invariants often requires development of
higher order adiabatic theory: a close to the identity time-periodic change of coordinates, $z=u_n(x,\tau)$, can
be used to eliminate the time-dependence in the leading orders: 
\[
z'=\varepsilon f_n(z,\varepsilon)+\varepsilon^{n+1} g(z,\tau,\varepsilon).
\]
An integral $h_n$ of the truncated system
\[z'=\varepsilon f_n(z,\varepsilon)\]
evaluated on a trajectory of the original system oscillates with an amplitude $O(\varepsilon)$
but stays near its initial value for substantially longer times $O(\varepsilon^{-n})$.
Indeed, suppose that a function $h_n$ is constant on trajectories of the truncated system. 
Then
the chain rule implies that the time derivative on the solution of the full
system $(h_n(z(\tau)))'=O(\varepsilon^{n+1})$. Consequently, 
$h_n$ stays $\varepsilon$-close to its initial value for times up to $O(\varepsilon^{-n})$. Since $u_n(x,\tau)=x+O(\varepsilon)$,
$h_n(x(\tau))$ oscillates around its initial value on the same timescale.
It is a remarkable property as the trajectories of the original and averaged systems
do not necessary remain close to each other on the timescale beyond $O(\varepsilon^{-1})$. 
This procedure provides a firm theoretical framework for the analysis of dynamics. On the other hand, computation of an adiabatic invariant, especially of higher orders, remains a challenging task as it relies on finding suitable coordinate changes. 

The discrete averaging suggests an alternative approach to computation of adiabatic invariants.
Suppose that $(x(t),\varphi(t))$ is a solution of the system \eqref{Eq:slow-fast}. Since $\varphi(t)$ is monotone increasing there is an increasing sequence $t_k$ that correspond to consecutive intersections of the solution
with the Poincaré section $\varphi=0\pmod{T}$. Then $x_k=x(t_k)$
is a trajectory of the first return map $F_\varepsilon:x_{k}\mapsto x_{k+1}$.
Since the return time is small, the first return map  is close to the identity, $F_\varepsilon=\operatorname{id}+O(\varepsilon)$.
The classical averaging provides an
approximation of $F_\varepsilon$ by a time-one map
of an autonomous vector field. 
In contrast to the classical averaging, 
the discrete averaging provides explicit expressions for interpolating vector fields $X_{n,\varepsilon}$ such that $F_\varepsilon=\Phi^1_{X_{n,\varepsilon}}+O(\varepsilon^{n+1})$,
the expressions are  given 
in terms of weighted averages over a trajectory segment.
Then the adiabatic invariants can be obtained from the analysis of the autonomous
vector fields $X_{n,\varepsilon}$.

\subsection{Interpolating vector fields}

The construction of an interpolating vector field is based on a 
finite segment of a trajectory of a map $F$. This map may depend
on one or more parameters but since all parameters are fixed during
the construction we can hide this dependence without risk of confusion.
Of course when $F$ depends on a parameter,  the corresponding interpolating vector field also does.

Let $n_0$ and $n$ be two integers with $0\le n_0\le n$. The integer $n$
defines the length of a trajectory segment and $n_0$ is used to select
a starting point. 
There is a unique $P_n(t,x)$, 
a polynomial of degree $n$ in $t$,
which interpolates a segment of the orbit of a point~$x$:
\begin{equation}\label{Eq:interpol}
P_n(k,x)=F^k(x)\quad\mbox{for $k=-n_0,\ldots,n-n_0$}.
\end{equation}
An interpolating vector field is the derivative of the interpolating polynomial at $t=0$:
\begin{equation}\label{Eq:Xn}
X_{n}(x)=\frac{\partial P_n}{\partial t}(0,x).
\end{equation}
We call our method the {\em discrete averaging} as the vector field $X_n$
is a weighted sum of the iterates of $F$. Indeed, 
the standard Lagrange interpolation procedure 
gives an explicit expression for the interpolating polynomial
\begin{equation}\label{Eq:LagrangeInterpol}
   P_n(t,x)=\sum_{k=-n_0}^{n-n_0}b_k(t)F^k(x) 
\end{equation}
where 
$b_k$ are the Lagrange basis polynomials,
\[
b_k(t)=\prod_{\substack {-n_0\le j \le n-n_0\\ j\ne k}}\frac{t-j}{k-j}.
\]
Consequently,
\begin{equation}\label{Eq:Xn_averaging}
X_{n}(x)=\sum_{k=-n_0}^{n-n_0}p_{nk}F^k(x)
,
\end{equation}
where the coefficients $p_{nk}=b_k'(0)$ depend on the choice of $n$ and $n_0$ but are independent of   the map $F$.
For example, in the cases $(n_0,n)=(0,1)$ and $(1,2)$ we get respectively
\begin{equation}\label{Eq:examples1-2}
X_1(x)=F(x)-x,\qquad X_2(x)=\tfrac12\bigl(F(x)-F^{-1}(x)\bigr).
\end{equation}
Any interpolating scheme
can be used to compute an interpolating vector field.
In particular, the Newton forward interpolation
scheme is a convenient tool for analytical proofs as it avoids inversion of the original map. 
On the other hand,  the Stirling-Newton central interpolation scheme
is often a 
better tool for numerical simulations as it provides more accurate approximations. 
For example, a center interpolation scheme
was used in \cite{GelfreichV18} was used for visualization of dynamics, while the forward interpolation was used
in the proof of the Nekhoroshev theorem~\cite{GelfreichV26}.

\subsection{An example: the Hénon map near a resonance\label{Se:henon}}

Before proceeding to the analysis of the discrete averaging
we discuss an application of the method to 
 the conservative Hénon map written  in the form similar to~\cite{SimVie09}:
\begin{equation} \label{henon}
H_c:(x,y)\mapsto  (c(1-(x+1)^2)+2x+y,-x).
\end{equation}
The linear reversing symmetry $R:(x,y)\mapsto (-y,-x)$
conjugates the Hénon map to its inverse:  $H_c^{-1} = R \circ H_c \circ R$. In particular we see that the inverse
map $H_c^{-1}$ is also polynomial. 

A straightforward computation shows that the map has two fixed points at $p_e=(0,0)$
and $p_h=(-2,2)$ respectively.
The fixed point $p_e=(0,0)$ is elliptic for $0<c<2$.
When $c=1$, the point $p_e$
is at the  1:4 resonance as the eigenvalues of $H_c'(p_e)$
are equal to $\pm i$.
In \cite{Bir87} it was shown that this resonance is not generic:
for $c=1+\varepsilon$ with $\varepsilon>0$ there is a 4-periodic hyperbolic orbit $p_4^h$ 
 at a distance $\mathcal{O}(\sqrt{\varepsilon})$ from $p_e$
 whose
invariant  manifolds form a chain of four islands of stability containing a
4-periodic elliptic orbit $p_4^e$ at a distance
$\mathcal{O}(\sqrt[4]{\varepsilon})$ from $p_e$. 
This is in contrast to the case of a non-degenerate stable 1:4
resonance where both elliptic and hyperbolic 4-periodic points are
located at a distance $\mathcal{O}(\sqrt{\varepsilon})$
form the elliptic fixed point.

A standard approach to analysis of the bifurcation taking place at $c=1$ is based on scaling of a $\varepsilon$-dependent neighbourhood of $p_e$ and considering the limit vector field  
of $H_{1+\varepsilon}^4$ as $\varepsilon\to0$. 
In the case of the Hénon map this approach fails to provide
the full picture of the bifurcation: 
choosing a scaling  that places $p_4^h$ at a unit distance
from $p_e$ sends $p_4^e$ to infinity, while choosing a scaling  that
places $p_4^e$
 at a unit distance from the origin forces the points of $p_4^h$ to collide at the origin. 
Thus, a more accurate approximation 
is needed  for analysis of the bifurcation.

We apply discrete averaging  to  the fourth iterate of 
the Hénon map  to obtain a suitable approximation.  
It suffices to use the second order central
interpolation scheme  \eqref{Eq:examples1-2} which yields
the interpolating vector field
\[
\begin{split}
X_2(x,y,\varepsilon)&=\tfrac12\left(H^4_{1+\varepsilon}(x,y)- H^{-4}_{1+\varepsilon}(x,y)\right).
\end{split}
\]
Since both $H_{1+\varepsilon}$ and $H_{1+\varepsilon}^{-1}$
are polynomial in $x,y$ and $\varepsilon$, the interpolating
vector field $X_2$ is also polynomial (of degree 16 in $(x,y)$
and of degree 31 in $(x,y,\varepsilon)$). 
Using an algebraic manipulator we check that
\[
H_{1+\varepsilon}^4-\Phi_{X_2}^1=O_{9}(x,y)+\varepsilon O_7(x,y)+O(\varepsilon^2)
\] 
where $O_k$ denotes a polynomial in $(x,y)$
that does not contain any term of order lower than $k$.
We note that the Hénon map is area-preserving but the vector field $X_2$ is only approximately Hamiltonian: 
its divergence is small but not vanishing,
\[
\operatorname{div }X_2(x,y,\varepsilon)= -12x^2y^2(x^3+y^3)+
O_8(x,y) 
 + 
\varepsilon O_4(x,y)
+O(\varepsilon^2)
.
\]
Since the divergence is small, the interpolating vector field $X_2$ is close
to a Hamiltonian vector field. We use integration to
extract the Hamiltonian part of the vector field. Let
\[
h_2(x,y,\varepsilon)=\int_0^1 X_2(t x,t y)\cdot(y,-x)dt 
\]
where $\cdot$ stands for the standard dot product. The function $ h_2$ is also polynomial and the direct comparison of the explicit expressions for $X_2$ and $ h_2$ 
gives
\[
X_2(x,y,\varepsilon)-\left(\frac{\partial  h_2}{\partial y}
,-\frac{\partial  h_2}{\partial x}\right) (x,y,\varepsilon)
=
O_8(x,y)+\varepsilon O_5(x,y)+O(\varepsilon^2).
\]
Computing the difference with the help of an algebraic manipulator we get that
\[
 h_2\circ  H_{1+\varepsilon}(x,y,\varepsilon)- h_2(x,y,\varepsilon)=O_9(x,y)+\varepsilon O_7(x,y)+O(\varepsilon^2).
\]
Consequently $ h_2$ is an approximate integral of motion 
for the Hénon map in a neighbourhood of the elliptic fixed point.
It is a remarkable fact as the construction of $ h_2$
is based on $H_{1+\varepsilon}^4$, the fourth iterate of the Hénon map,
while we observe conservation under the first iterate.
We provide an explanation in Section~\ref{Se:symmetries}.
Expanding $h_2$ in Taylor series of order 7 in $(x,y,\varepsilon)$ and dropping the terms of order $O(\varepsilon^2)$ we get
 a simpler polynomial function
 that provides an approximation for $h_2$ in a
neighbourhood of the origin:
\[
\begin{split}
\tilde h_2&=
- x^2 y^2+ x^4 y-x y^4-\frac{x^6}{3}+2 x^3 y^3-\frac{y^6}{3}+
  x^2 y^5-x^5 y^2
   \\&\quad
+\varepsilon\Bigl(
2  \left(x^2+y^2\right)+2  x y (y-x)+
\left(x^2-y^2\right)^2+ 3   x^4 y-2 x^3 y^2+2 x^2 y^3-3 x y^4
\\
&\qquad\qquad+
\tfrac{1}{3} \left(-4 x^6+6 x^5 y-17 x^4 y^2+24 x^3 y^3-17 x^2 y^4+6 x y^5-4 y^6\right)
\Bigr).
\end{split}
\]
Computing the difference with the help of an algebraic manipulator we get that
\[
 \tilde h_2\circ  H_{1+\varepsilon}(x,y,\varepsilon) - 
 \tilde h_2(x,y,\varepsilon)=O_9(x,y)+\varepsilon O_7(x,y)+O(\varepsilon^2).
\]
We see that $\tilde h_2$ is preserved up to an error of the same order as the error for the full polynomial $ h_2$. Figure~\ref{Fig:levelsH} shows the excellent agreement between level lines
of $\tilde h_2$ and plots of trajectories of the Hénon map $H_c$.

\begin{figure}
\begin{center}
\includegraphics[width=0.5\textwidth]{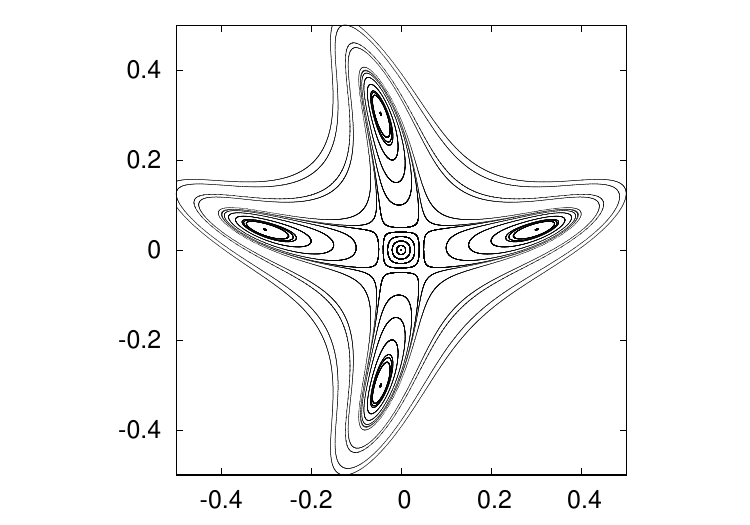}\includegraphics[width=0.5\textwidth]{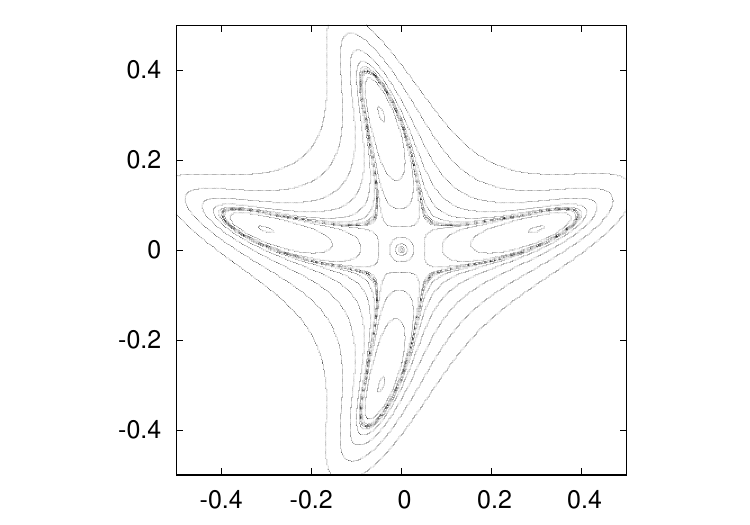}
\caption{Iterates of the Henon map \eqref{henon} (left) and 
level lines  of the Taylor polynomial of the adiabatic invariant $h_2$ of order 6 in $x,y$ and one in $\varepsilon$ (right) for $c=1+\varepsilon$ with  $\varepsilon=10^{-3}$.}
\label{Fig:levelsH}
\end{center}
\end{figure}

This example shows that the symmetric interpolating vector fields
of order two can be used to construct
an adiabatic invariant that is well preserved in a neighbourhood of the elliptic fixed point 
and correctly describes the degenerate bifurcation in the Hénon map. A higher accuracy approximation can be achieved by using
a higher order interpolation, for example, the fourth order
symmetric interpolating vector field defined by
\[
X_4(x,y,\varepsilon)=\frac{2}{3}\left(H_c^4(x,y,\varepsilon)
-H_c^{-4}(x,y,\varepsilon)\right)
-\frac1{12}
\left(H_c^8(x,y,\varepsilon)
-H_c^{-8}(x,y,\varepsilon)\right).
\]
The  computations  can be repeated with $X_4$ replacing $X_2$, and all errors move to a higher order. 
The adiabatic invariant $h_4$ obtained in this computation
 has the same lower orders as $h_2$ except for  two terms
in the last line of the expression for $\tilde h_2$
where we get $-13$ instead of $-17$. Of course, $h_4$
is preserved with a substantially higher accuracy.

Later in the paper we will show that 
using an interpolating vector field  of a higher order instead of $X_2$ it is possible to derive expressions for adiabatic invariants preserved up to an error of  arbitrarily high order
in $(x,y,\varepsilon)$. 
We note that the computation of
a higher order interpolating vector field requires the knowledge of iterates
of the map $H^{4k}_c$. These maps are polynomial but the degree
 grows quickly with $k$. Since terms 
of higher orders do not contribute to lower orders of the
adiabatic invariant, the symbolic computation can be substantially simplified by iterating an $n$-jet of the map,
instead of the full map, dropping the terms of order $n+1$
and higher at each iterate. 

Figure~\ref{Fig:levelsH}
shows that the adiabatic invariant is well preserved  for relatively large values of $x$ and $y$.
To determine  a domain $D$ where the discrete averaging can be used to analyze the dynamics we proceed as follows.
We consider the 
square $K=[-1,1]^2$. On a mesh of points $(x,y)$ in
$K$, we compute the integer $n$ minimizing $G_{(x,y)}(n)=|X_{2n}(x,y) -
X_{2n+2}(x,y)|_2$ where $X_{2n}$ is the symmetric
interpolating vector field of order $2n$.
Theorem~\ref{Thm:alaNeishtadt} implies that $G_{(x,y)}(n)=
\mathcal{O}(\epsilon^{2n+1})$  where $\epsilon$
characterizes  the closeness of the map to the identity.
Accordingly, for points in $D$  we expect that $G_{(x,y)}(n)$ first decreases as $n$ increases and then starts increasing.  The corresponding minimal value can be used as an indicator 
for the best accuracy that can achieved by the interpolating method. 
Outside $D$ the averaging 
approximation does not hold and we expect that $n=1$ minimizes $G_{(x,y)}(n)$.
The change in this behavior is expected to be quite abrupt, allowing us to
roughly determine the domain $D$ where discrete averaging can be used to approximate the dynamics. 
The Hamiltonian $h_2$ based on the second order interpolation scheme
defines an adiabatic invariant throughout $D$.
However, outside a neighborhood of the boundary of $ D$
 we can construct adiabatic invariants that are preserved with substantially higher accuracy.

Figure~\ref{nopt} illustrates an application of this analysis to the Hénon map
for two values of the parameter $\varepsilon$, before and after the bifurcation respectively. The top raw presents values of the optimal interpolation order $n$
for each  $(x,y)$ on the grid. The black color corresponds to the region
where $n=1$ and the discrete averaging does not provide a good approximation
for the dynamics. The white region corresponds to the area where $n\ge10$ and the
approximation is excellent. The 
second raw shows values of $G_{(x,y)}(n)$ for the optimal $n$. 
In the middle of the white region 
the error of a high order interpolation is below the computational precision
and for that reason we did not test methods above order 10.
We see that the boundary of the domain is indeed reasonably well defined.
The domain is rather large and fully covers the zone that contains the islands of
stability created in the bifurcation. 

\begin{figure}
\begin{center}
\includegraphics[width=0.5\textwidth]{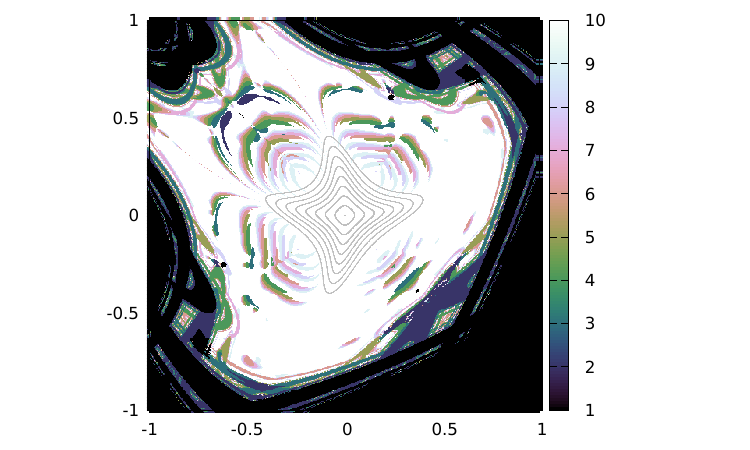}\includegraphics[width=0.5\textwidth]{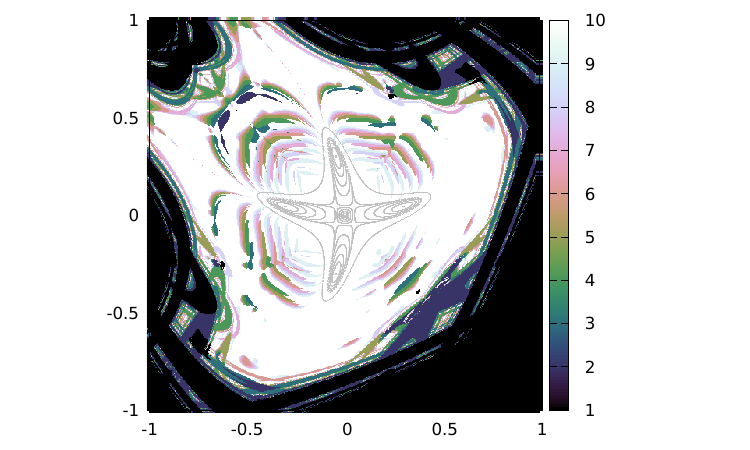}\\
\includegraphics[width=0.5\textwidth]{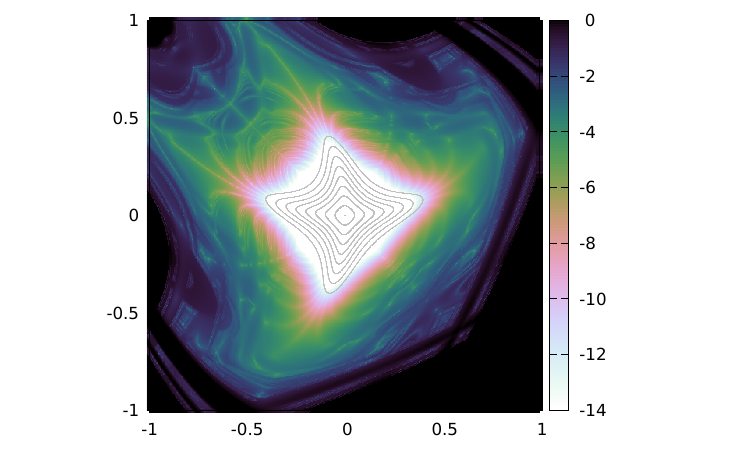}\includegraphics[width=0.5\textwidth]{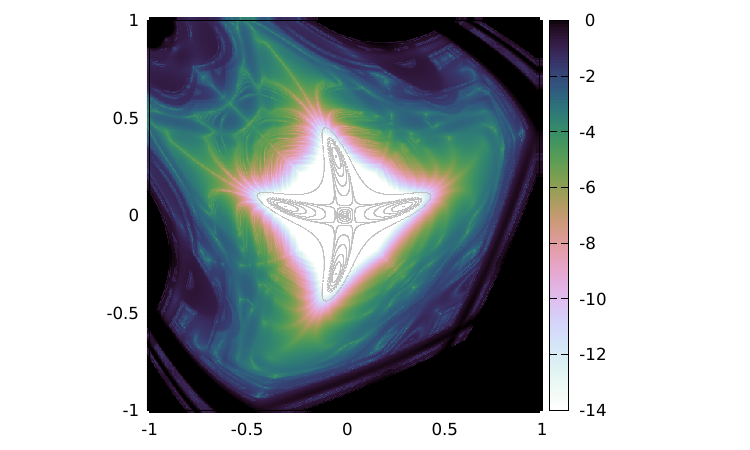}
\end{center}
\caption{Domain of validity of discrete averaging for the Hénon map with $\varepsilon=-10^{-3}$ (left column) and $\varepsilon=10^{-3}$ (right column). 
In the top row, the colors indicate the optimal $n$ values that minimize  $G_{(x,y)}(n)= |X_{2n}(x,y) - X_{2n+2}(x,y)|_2$.
The bottom row plots exhibit colors representing the 
decimal logarithm  of the minimum values of $G_{x,y}(n)$ over $n$. For reference, orbits of the Hénon map are shown in light gray in all plots.\label{nopt}}
\end{figure}

We emphasize that our computations  of the adiabatic invariant are performed in the original coordinates and do not use transformations to a normal form.
In order to accelerate computation of the symbolic expressions for
adiabatic invariants we use jet iteration.
On the other hand, the determination of  numerical values
for adiabatic invariants can be 
carried out from
numerically computed trajectories of the map,
similarly to  \cite{GelfreichV18}.
This approach permits analysis of bifurcations (including degenerate cases), determine its stability and providing an accurate adiabatic
invariant via direct computations in the original coordinates of the map, even when an analytic explicit closed form expression of the map is not available.

\goodbreak

\section{Approximation by autonomous flows\label{Se:families}}

Let $D\subset \R^d$ be an open domain and consider  a smooth family of maps $F_\varepsilon:D\to\R^d$
that is {\em tangent to identity}, i.e., 
\begin{equation}\label{eq:main_map}
F_\varepsilon(x)=x+\varepsilon f_\varepsilon(x)
\end{equation}
where
the Taylor expansion of 
$f_\varepsilon$ 
with respect to $\varepsilon$ up to an order $n$ 
is given by
\[
f_\varepsilon(x)=\sum_{k=0}^{n-1} \varepsilon^k f_k(x)+\varepsilon^n r_n(x,\varepsilon)
\]
with a bounded remainder $r_n$. It is well known that $F_\varepsilon$
can be approximated by an autonomous flow. The approximation error
of the best approximation depends on the smoothness of the family. 
The following theorem shows that the approximating flows
can be obtained using discrete averaging. 

\begin{thm} \label{Thm:formal}
If a tangent to identity family $F_\varepsilon\in C^{n+1}\bigl(D\times [-\varepsilon_0,\varepsilon_0]\bigr)$
for some $\varepsilon_0>0$,
then there is a unique polynomial in $\varepsilon$ vector field
\(
g_\varepsilon(x)=\sum_{k=0}^{n-1}\varepsilon^k g_k(x)
\)
such that 
\[
F_\varepsilon=\Phi^1_{\varepsilon g_\varepsilon}+O(\varepsilon^{n+1})
\]
uniformly on every compact subset of $D$. Moreover, for   $1\le k\le m \le n$
\begin{equation}\label{Eq:g_k}
\frac{1}{k!}\left.\frac{\partial^k X_m}{\partial \varepsilon^{k}}\right|_{\varepsilon=0}=g_{k-1},    
\end{equation}
where $X_m$ is an interpolating vector field of order $m$ defined by \eqref{Eq:Xn_averaging},
and consequently
\begin{equation}\label{Eq:errorbound}
F_\varepsilon=\Phi^1_{X_m}+O(\varepsilon^{m+1}).
\end{equation}
\end{thm}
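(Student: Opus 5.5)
The plan is to establish existence and uniqueness of $g_\varepsilon$ by formal order-by-order matching, and then to identify its coefficients with the $\varepsilon$-derivatives of $X_m$ through the exact polynomial-in-$t$ structure of iterates of a flow.

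\textbf{Existence and uniqueness.} For a vector field $\varepsilon g_\varepsilon$, the time-$t$ flow has the Lie series expansion
\[
\Phi^t_{\varepsilon g_\varepsilon}(x)=x+\sum_{j=1}^{n} \frac{t^j\varepsilon^j}{j!}\,L_{g_\varepsilon}^{j-1}g_\varepsilon(x)+O(\varepsilon^{n+1}),
\]
where $L_g h=Dh\cdot g$. This is valid uniformly on compacts as long as $g_\varepsilon\in C^{n}$. Set $t=1$ and collect powers of $\varepsilon$. The coefficient of $\varepsilon^{k+1}$ is $g_k$ plus a polynomial expression in $g_0,\dots,g_{k-1}$ and their derivatives. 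Matching it with $f_k$ therefore determines $g_k$ recursively and uniquely: $g_0=f_0$, $g_1=f_1-\tfrac12 Dg_0\, g_0$, and so on.

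Regularity needs a check. $F_\varepsilon\in C^{n+1}$ gives $f_k\in C^{n-k}$, and $g_k$ involves up to $k$ derivatives of $f_0$. Some care is needed, but each $g_k$ only requires finitely many derivatives, and the remainder estimate follows from Taylor's theorem in $\varepsilon$ applied to the $C^{n+1}$ function $\varepsilon\mapsto F_\varepsilon(x)-\Phi^1_{\varepsilon g_\varepsilon}(x)$. I expect this bookkeeping to be the main technical nuisance: one must check that the Lie-series truncation and the Taylor remainders stay bounded uniformly on compact subsets of $D$ given only $C^{n+1}$. I will handle it by writing $\Phi^t_{\varepsilon g}$ as $x+\varepsilon t\,\psi(t\varepsilon,x)$ and using the smooth dependence of flows on parameters.

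\textbf{Identification with $X_m$.} The key observation is that iterates of a flow are exactly samples of a smooth curve in $t$. Set $G_\varepsilon=\Phi^1_{\varepsilon g_\varepsilon}$. Then $G_\varepsilon^k=\Phi^{k}_{\varepsilon g_\varepsilon}$, and by the Lie series the map $t\mapsto\Phi^t_{\varepsilon g_\varepsilon}(x)$ agrees modulo $O(\varepsilon^{m+1})$ with a polynomial $Q(t,x)$ of degree $\le m$ in $t$, since the $\varepsilon$- and $t$-powers come together as $(t\varepsilon)^j$. We also have $\partial_tQ(0,x)=\varepsilon g_\varepsilon(x)$ exactly, up to the truncated terms.

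Because $F^k_\varepsilon=G^k_\varepsilon+O(\varepsilon^{n+1})$ for each fixed $|k|\le m$, by induction on $k$ using Lipschitz bounds (and the analogous argument for $F^{-1}$ when $n_0>0$), we get
\[
X_m=\sum_k p_{mk}F^k_\varepsilon=\sum_k p_{mk}Q(k,\cdot)+O(\varepsilon^{m+1}).
\]
Lagrange interpolation on $m+1$ nodes reproduces polynomials of degree $\le m$ exactly, so $\sum_k p_{mk}Q(k,x)=\partial_tQ(0,x)$. Hence
\[
X_m=\varepsilon\sum_{j=0}^{m-1}\varepsilon^j g_j+O(\varepsilon^{m+1}).
\]
Note that this step uses $F_\varepsilon$ only through $O(\varepsilon^{m+1})$ accuracy, which is available since $m\le n$.

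\textbf{Conclusion.} Differentiating $k$ times in $\varepsilon$ at $0$ gives \eqref{Eq:g_k}. Finally, $X_m$ and $\varepsilon\sum_{j<m}\varepsilon^jg_j$ differ by $O(\varepsilon^{m+1})$ together with their needed derivatives. Two vector fields of size $O(\varepsilon)$ that differ by $O(\varepsilon^{m+1})$ have time-one maps differing by $O(\varepsilon^{m+1})$, by Gronwall. Combined with the existence statement applied at order $m$, this yields \eqref{Eq:errorbound}.
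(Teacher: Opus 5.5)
Your proposal is correct and follows essentially the same route as the paper. You expand $\Phi^t_{\varepsilon g_\varepsilon}$ in powers of $t\varepsilon$ via iterated directional derivatives and determine the $g_k$ recursively by matching coefficients. You then use that the Lagrange weights $p_{mk}$ reproduce $\partial_t|_{t=0}$ exactly on polynomials of degree $\le m$, together with $F^k_\varepsilon=\Phi^k_{\varepsilon g_\varepsilon}+O(\varepsilon^{n+1})$, and conclude by smooth dependence of the flow on the vector field. Your version is slightly more careful than the paper's: the paper writes the interpolation step only for $m=n$, and it does not address the finite-regularity bookkeeping for the Lie-series remainder that you flag.
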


\begin{proof}
The time-one flow defined by the ordinary differential equation (ODE)
\begin{equation}\label{eq:ODE}
\dot x=\varepsilon  g_\varepsilon(x)
\end{equation}
can be written in the form
\(\Phi_{\varepsilon g_\varepsilon}^t(x)=x+y(t,x),
\) 
where $\partial _ty(t,x)=\varepsilon g_\varepsilon(x+y(t,x))$
and $y(0,x)=0$. We suppress the explicit dependence of the function $y$ on the parameter $\varepsilon$.
Then 
\[
\partial_t^2y(t,x)=\varepsilon\partial_t  g_\varepsilon(x+y(t,x))=\varepsilon^2 g_\varepsilon'(x+y(t,x))g_\varepsilon(x+y(t,x))
=\varepsilon^2 D_{g_\varepsilon }g_\varepsilon (x+y(t,x)) 
\]
where $D_g$ is the directional derivative:
\[
D_gf(z)=f'(z)g(z)=\lim_{\delta\to0} \frac{f(z+\delta g(z))-f(z)}{\delta}.
\]
Then  for all $k$
\[
\partial_t^k y(t,x)
=\varepsilon^k D_{g_\varepsilon }^{k-1}g_\varepsilon (x+y(t,x)) .
\]
By expanding the solutions of the ODE into Taylor series in time
and evaluating time derivatives at $t=0$,
we arrive to an expression for the flow defined by the ODE~\eqref{eq:ODE}:
\begin{equation}\label{Eq:Phi_t}
\Phi^t_{\varepsilon g_\varepsilon}(x)=x+t\varepsilon g_\varepsilon(x)+\sum_{k=2}^{n} 
\frac{\varepsilon^k t^k}{k!}D_{g_\varepsilon}^{k-1}g_\varepsilon(x)+\varepsilon^{n+1}t^{n+1}q_n(t,x,\varepsilon)
.
\end{equation}
The Taylor expansions of $F_\varepsilon$ and $\Phi^1_{\varepsilon g_\varepsilon}$ coincide up to the order of $\varepsilon^n$
if and only if
\[
g_0=f_0\quad\text{and}\quad g_k=f_k-\sum_{j=2}^{k+1}\frac{1}{j!}\left[D_{g_\varepsilon}^{j-1}g_\varepsilon\right]_{k+1-j}
\quad\mbox{for $k=1,\ldots,n-1$,}
\]
where the operation $[\cdot]_j$ extracts the coefficient 
in front of $\varepsilon^j$ from a polynomial  in $\varepsilon$.
Using bi-linearity of the directional derivative it is easy to see that 
the sum in the right-hand side is a function of $g_0,\ldots,g_{k-1}$ and their derivatives. 
Consequently, the coefficients $g_k$ are defined uniquely and explicitly. 
After making this choice for the coefficients we obtain the vector field $g_\varepsilon$ such that
\[
F_\varepsilon(x)=\Phi^1_{\varepsilon g_\varepsilon}(x)+O(\varepsilon^{n+1}).
\]
Now consider the Lagrange interpolating polynomial $P_m$ defined by \eqref{Eq:LagrangeInterpol}.
We note that interpolation procedure is linear with respect to the interpolant 
and reproduces a polynomial interpolant exactly. 
Therefore, the equation \eqref{Eq:Phi_t} implies that 
\[
\sum_{k=-n_0}^{n-n_0}p_{nk}\Phi_{\varepsilon g_\varepsilon}^k(x)
=\varepsilon g_\varepsilon(x)+
\varepsilon^{n+1}
\sum_{k=-n_0}^{n-n_0}p_{nk}k^{n+1}q_n(k,x,\varepsilon)
\]
where the coefficients $p_{nk}$ are the same as in \eqref{Eq:Xn_averaging}.
Since $F_\varepsilon^k(x)=\Phi_{\varepsilon g_\varepsilon}^k(x)+O(\varepsilon^{n+1})$
for every fixed $k$,
we get
\[
X_{n}(x,\varepsilon)=\sum_{k=-n_0}^{n-n_0}p_{nk}F_\varepsilon^k(x)=
\sum_{k=-n_0}^{n-n_0}p_{nk}\Phi_{\varepsilon g_\varepsilon}^k(x)+O(\varepsilon^{n+1})
=\varepsilon g_\varepsilon(x)+O(\varepsilon^{n+1}).
\]
The smooth dependence of the flow on its vector field implies that
 the Taylor series of $F_\varepsilon$ and $\Phi^1_{X_n}$
coincide up to the order $O(\varepsilon^n)$. 
\end{proof}

\begin{remark}[Computing the formal interpolating flow using discrete averaging]
The equations~\eqref{Eq:Xn_averaging} and \eqref{Eq:g_k} imply that 
 coefficients of the formal interpolating vector field can be computed
using the forward interpolation scheme:
\[
g_{m-1}(x)=\sum_{k=1}^m\frac{p_{mk}}{m!}\left.\frac{\partial^m}{\partial \varepsilon^m}\right|_{\varepsilon=0}F^k_\varepsilon(x).
\]
The symbolic computation can be simplified by noting that 
in order to get an analytic expression for $g_{m-1}$ it is sufficient
to compute each of the  iterates  $F^k_\varepsilon$ keeping the terms up to the order $\varepsilon^m$
only.

This approach provides a new method for computation of the
formal interpolating vector field. On the other hand,
 $X_m$ has an important advantage over the truncated formal interpolator: 
its computation does not require knowledge of the derivatives of the  function
$F_\varepsilon$ and, consequently, it can be used 
 even in 
the case when no explicit analytic expression for $F_\varepsilon$ is  available.
\end{remark}

\section{Uniform bounds in the space of analytic diffeomorphisms\label{Se:neishtadt}}

In the previous section we established the rate of decay
for an approximation of a map $F_\varepsilon$ by an autonomous interpolating flow as the map approaches the identity when $\varepsilon$ goes to zero. If the family is infinitely smooth, then for any $m$ there is an interpolating vector field $X_m$
such that $F_\varepsilon=\Phi^1_{X_m}+O(\varepsilon^{m+1})$.
The error term decays faster for larger  $m$ but Theorem~\ref{Thm:formal} does not provide any information
on the constant in the $O$-term. Therefore it is not
possible to use this theorem to decide what order of interpolation leads to the most accurate approximation for a fixed value of $\varepsilon$.

In this section we answer this question 
for analytic diffeomorphisms. 
We show that all analytic maps from a fixed neighbourhood 
of the identity admit an approximation by
the time-one flow of interpolating vector fields with uniform bounds on the approximation error.
We prove that the approximation error is controlled by the ratio $\epsilon/\delta$ of two natural parameters:  $\epsilon$ describes the distance to the identity map
and $\delta$ characterizes the size of the complex neighbourhood where the map
is $\epsilon$-close to the identity. 

In contrast to Theorem~\ref{Thm:formal},
 the assumptions of the following theorem does not require
the map to be a member of a family, thus we drop notation of  potential dependence of $F$ on its parameters.

Suppose that $\epsilon$ and $\delta$ are two positive numbers. 
For a set $D_0\subset \C^d$, let  $D_\delta$ be a  $\delta$-neighbourhood 
of $D_0$. We  denote by $C^\omega(D_\delta,\C^d)$  the space of all
analytic maps $F:D_\delta\to \C^d$ equipped with 
the supremum norm
\[
\|F\|_{D_\delta}=\sup_{x\in D_\delta}|F(x)|,
\]
where we use the maximum component norm for vectors in $\C^d$.
Let $B_{\epsilon}(D_\delta)\subset C^\omega(D_\delta,\C^d)$ be the closed ball of radius $\epsilon$ centered
at the identity map:  an analytic map $F\in B_{\epsilon}(D_\delta)$ iff
\begin{equation}\label{Eq:f-xi}
\|F-\operatorname{id}\|_{D_\delta}\le \epsilon.
\end{equation}
If a segment of a trajectory $x_k=F^k(x)\in D_{\delta}$ for $k=0,\dots,m$,  
the Newton forward interpolation scheme provides an explicit 
expression for the interpolating vector field of order $m$ at the point $x$:
\begin{equation}\label{Eq:interpol_VF}
	X_m(x)=\sum_{k=1}^m\frac{(-1)^{k-1}}{k}\Delta_k(x),
\end{equation}
where the finite differences are defined recursively
\begin{equation}\label{Eq:finite-diff}
	\Delta_0(x)=x,
	\qquad 
	\Delta_{k}(x)=\Delta_{k-1}(F(x))-\Delta_{k-1}(x)
	\quad\text{for  $k\ge 1$}.
\end{equation}
The interpolating vector field of order one, $X_1(x)=F(x)-x$,
is similar to the vector field of the limit flow that is
traditionally used in the analysis of dynamics of a near-the-identity map.
The method also includes  higher order interpolation schemes that correspond to $m\ge2$.

Let $\Phi^t_X$ denote the flow defined by a differential equation $\dot x=X(x)$.
Let  integral $M_{\alpha}=\left\lfloor c_0/\alpha\right\rfloor+1
$ where the constant $c_0=1/(6\mathrm e)= 0.061\dots$.

\begin{thm}\label{Thm:alaNeishtadt}
Let $\epsilon,\delta>0$ be such that $\epsilon/\delta \le c_0$ and $D_0\subset \C^d$.
Then for any map $F\in B_{\epsilon}(D_\delta)$  the interpolating vector field 
of order one satisfies $\|X_1\|_{D_\delta}\le\epsilon $ and  
\begin{equation}
   \bigl\|\Phi^1_{X_1}-F \bigr\|_{D_0} 
  \le 2\epsilon^2/\delta.
\end{equation}
The interpolating vector field of order $m\in [2,M_{\epsilon/\delta}]$ satisfies $\|X_m\|_{D_{\delta/3}}
  \le 2\epsilon $ and
\begin{equation}\label{Eq:Phi-f}
    \bigl\|\Phi^1_{X_m}-F \bigr\|_{D_0} 
  \le   
    3\epsilon  \left( \frac{6 (m-1)\epsilon}{\delta}\right)^{m} .
\end{equation}
In particular, for $\displaystyle m =M_{\epsilon/\delta} $ 
\begin{equation}\label{Eq:expbound}
\bigl \|\Phi^1_{X_{\mathrm{m}}}-F \bigr\|_{D_0}\le 
3\,\epsilon \exp\left(- c_0\, \delta /\epsilon\right).
\end{equation}
\end{thm}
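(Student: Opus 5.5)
The plan is to turn the single map $F$ into an analytic one-parameter family and then get the uniform bounds from Theorem~\ref{Thm:formal} combined with Cauchy estimates and the maximum principle in the complex parameter. Set $f=F-\operatorname{id}$, so $\|f\|_{D_\delta}\le\epsilon$, and consider $F_\mu(x)=x+\mu f(x)$ for complex $\mu$ with $|\mu|\le R$, where $R\ge1$ will be chosen of order $\delta/(6(m-1)\epsilon)$. Since $F_1=F$, it suffices to bound quantities built from $F_\mu$ on the circle $|\mu|=R$ and then evaluate at $\mu=1$.

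The key steps, in order, are as follows.

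\emph{Step 1: iterates stay in the domain.} For $x\in D_{\delta/3}$ and $k\le m$, induction gives $|F_\mu^k(x)-x|\le k|\mu|\epsilon$ as long as the previous iterates remain in $D_\delta$. The choice $mR\epsilon\le 2\delta/3$, which is ensured by $m\le M_{\epsilon/\delta}$ and the value of $c_0$, keeps every iterate in $D_\delta$. Consequently all $F_\mu^k$ with $k\le m$, the finite differences $\Delta_k$ of \eqref{Eq:finite-diff}, and $X_m(\mu)$ are analytic in $(x,\mu)$.

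\emph{Step 2: bound $X_m$.} Each $\Delta_k$ is a $k$-th finite difference of $F_\mu^j$, and it vanishes to order $k$ at $\mu=0$. This is the analytic version of the fact, used in the proof of Theorem~\ref{Thm:formal}, that the $k$-th difference of an orbit of an $O(\mu)$-perturbation of the identity is $O(\mu^k)$. The crude bound $|\Delta_k|\le\sum_j\binom kj|F_\mu^j-\operatorname{id}|\le 2^k kR\epsilon$ on $|\mu|=R$, combined with the Schwarz lemma in $\mu$, gives $|\Delta_k(1)|\le 2^k kR\epsilon/R^k$. Summing with weights $1/k$ as in \eqref{Eq:interpol_VF} yields a geometric series dominated by its first term, hence $\|X_m\|_{D_{\delta/3}}\le 2\epsilon$ when $R\ge 6$. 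For $m=1$ the bound $\|X_1\|_{D_\delta}\le\epsilon$ is immediate from the definition.

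\emph{Step 3: bound the error.} Let $E(\mu)=\Phi^1_{X_m(\mu)}-F_\mu$ on $D_0$. By Theorem~\ref{Thm:formal}, applied to the analytic, hence smooth, family $F_\mu$, the function $E$ vanishes to order $m+1$ at $\mu=0$. The flow of $X_m(\mu)$ stays in $D_{\delta/3}$ for time one by Step 2, since $|X_m(\mu)|\lesssim|\mu|\epsilon\le\delta/3$. Hence $\sup_{|\mu|=R}|E|\lesssim 3R\epsilon$, and the maximum principle applied to $E(\mu)/\mu^{m+1}$ gives $|E(1)|\le 3\epsilon R^{-m}$. Taking $R=\delta/(6(m-1)\epsilon)$ gives \eqref{Eq:Phi-f}.

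The case $m=1$ is easier to do directly. Write $\Phi^1_{X_1}(x)-F(x)=\int_0^1\bigl(X_1(\Phi^t(x))-X_1(x)\bigr)\,dt$. The Cauchy estimate $|X_1'|\le\epsilon/\delta'$ on $D_0$ (for $\delta'$ close to $\delta$), together with $|\Phi^t(x)-x|\le t\epsilon$, gives at most $2\epsilon^2/\delta$. Finally, \eqref{Eq:expbound} follows by substituting $m=M_{\epsilon/\delta}$. Then $6(m-1)\epsilon/\delta\le 6c_0=1/\mathrm e$, so the bound becomes $3\epsilon\,\mathrm e^{-m}\le 3\epsilon\exp(-c_0\delta/\epsilon)$, because $m>c_0\delta/\epsilon$.

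The main obstacle is the bookkeeping in Steps 1--3. The domain shrinkages ($\delta\to\delta/3\to0$), the requirement $R\ge1$ (so that $\mu=1$ lies inside the disc), and the compatibility between $mR\epsilon\le 2\delta/3$ and $R=\delta/(6(m-1)\epsilon)$ must all fit together with the stated constants 2, 3 and 6. I would first fix $R$ by the final formula and then check that the first two constraints follow from $m\le M_{\epsilon/\delta}$, adjusting the intermediate splittings of $\delta$ if needed.
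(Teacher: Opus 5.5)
\textbf{There is a gap in Steps 2--3: you use a single disc in $\mu$ where two nested discs are needed.}

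Your framework matches the paper's. It uses the homotopy $F_\mu=\operatorname{id}+\mu(F-\operatorname{id})$, binomial bounds on $\Delta_k$ with the Schwarz lemma in $\mu$, Theorem~\ref{Thm:formal} for the vanishing order $m+1$ of $E(\mu)$, and the maximum principle at $\mu=1$. Your final radius $R=\delta/(6(m-1)\epsilon)$ is exactly the paper's $\mu_m/4$. Your direct $m=1$ argument is a valid alternative to the paper's maximum principle on $|\mu|\le\delta/\epsilon$. It needs one correction: take the Cauchy bound $\epsilon/(\delta-\epsilon)$ on $D_\epsilon$, which contains the segments $[x,\Phi^t(x)]$, rather than on $D_0$. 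This gives $\epsilon^2/(2(\delta-\epsilon))\le 2\epsilon^2/\delta$.

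Here is why the single disc fails. Step 2 applies the Schwarz lemma on $|\mu|\le R$ and evaluates only at $\mu=1$. Step 3 needs $\|X_m(\mu)\|_{D_{\delta/3}}\le 2\epsilon|\mu|$ on the circle $|\mu|=R$ itself. You need this both to keep the flow of $X_m(\mu)$ in $D_{\delta/3}$ (otherwise $E(\mu)$ is not defined there) and to get $\sup_{|\mu|=R}\|E\|_{D_0}\le 3R\epsilon$. On that circle your estimates have no decay. They give only $\|\Delta_k(\mu)\|\le k2^{k-1}R\epsilon$, hence $\|X_m(\mu)\|\le(2^m-1)R\epsilon=(2^m-1)\delta/(6(m-1))$. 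This exceeds $\delta/3$ for every $m\ge2$, and would anyway cost a factor $2^m$ in \eqref{Eq:Phi-f}. Also, the condition $R\ge6$ you invoke is not available. The hypothesis $m\le M_{\epsilon/\delta}$ only gives $R\ge 1/(6c_0)=\mathrm e$. For $R$ near $\mathrm e$, your single-disc sum does not give $\|X_m\|\le 2\epsilon$ even at $\mu=1$.

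\textbf{The fix is to bound the finite differences on the larger disc $|\mu|\le 4R=\frac{2\delta}{3(m-1)\epsilon}$, which is the paper's $\mu_m$.}
\begin{itemize}
\item For $x_0\in D_{\delta/3}$ and $j\le m-1$, the iterates satisfy $|x_j-x_0|\le (m-1)\,4R\epsilon=2\delta/3$. So they stay in $D_\delta$, and every $\Delta_k$ with $k\le m$ is analytic on this disc.
\item Write $\Delta_k(x_0)=\sum_{j=0}^{k-1}(-1)^{k-1-j}\binom{k-1}{j}\Delta_1(x_j)$. This gives $\|\Delta_k\|_{D_{\delta/3}}\le 2^{k-1}|\mu|\epsilon\le 2^{k-1}\cdot 4R\epsilon$ there.
\item The Schwarz lemma then gives $\|\Delta_k(\mu)\|_{D_{\delta/3}}\le 2^{k-1}\epsilon|\mu|^k/(4R)^{k-1}$.
\item For $|\mu|\le R$, the sum \eqref{Eq:interpol_VF} is geometric with ratio at most $1/2$. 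Hence $\|X_m(\mu)\|_{D_{\delta/3}}\le \epsilon|\mu|/(1-|\mu|/(2R))\le 2\epsilon|\mu|$ on the whole small disc.
\end{itemize}
This gives $\|X_m(\mu)\|\le\delta/(3(m-1))$, so the flow stays in $D_{\delta/3}$, and $\|E(\mu)\|_{D_0}\le 3\epsilon|\mu|$. It also gives $\|X_m\|_{D_{\delta/3}}\le 2\epsilon$ at $\mu=1$, which lies in the small disc since $R\ge\mathrm e$. Your Step 3 then goes through unchanged and yields \eqref{Eq:Phi-f}. The domain budget $(m-1)\cdot 4R\epsilon=2\delta/3$ is used exactly, and this is where the constant $6$ comes from.
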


\begin{proof}
We consider the family that connects $F$ and the identity map,
\[
f_\mu=(1-\mu)\operatorname{id} +\mu F
\]
where $\mu$ is a complex parameter.  
Obviously the  function $f_\mu$ is analytic in the same domain $D_\delta$ as the function $F=f_1$.

\noindent{\bf 1. The limit flow.} First we consider the
 vector field 
\[
X_{1,\mu}=f_\mu -\operatorname{id} =\mu(F-\operatorname{id} ).
\] 
Equation \eqref{Eq:f-xi} implies
\[\|X_{1,\mu}\|_{D_{\delta}}=\|\mu(F-\operatorname{id} )\|_{D_{\delta}}\le |\mu|\epsilon.
\]
For $|\mu|\le\mu_1=\delta/\epsilon$, this norm does not exceed $\delta$, the distance from
$D_0$ to the boundary of $D_\delta$. Consequently,
the time-one flow $\Phi^1_{X_{1,\mu}}(x)$ is defined 
for all $x\in D_0$ and 
\[\bigl\|\Phi^1_{X_{1,\mu}}-\operatorname{id} \bigr\|_{D_0}
\le |\mu|\epsilon.
\]
Then the triangle inequality implies that
\[
\bigl\|\Phi^1_{X_{1,\mu}}-f_\mu \bigr\|_{D_0}\le 
\bigl\|\Phi^1_{X_{1,\mu}}-\operatorname{id} \bigr\|_{D_0}+\bigl\|\operatorname{id} -f_\mu \bigr\|_{D_0}
\le 2 |\mu|\epsilon. \]
 Theorem~\ref{Thm:formal} implies that $\Phi^1_{X_{1,\mu}}-f_\mu=O(\mu^2)$. The Maximum Modulus Principle (MMP\footnote{We  use the following simple statement of Complex Analysis.
 If a function $g$ is analytic and bounded in an open disk   $|\mu|<r$ and $g^{(k)}(0)=0$ for $k=0,1,\ldots,m-1$,
 then  the maximum modulus principle implies that 
 $|g(\mu )|\le \left(|\mu|/r\right)^{m} \sup_{|\mu|< r} |g(\mu )|$.
 Of course, if the function  extends continuously onto the boundary of the disk,
 the supremum can be replaced by the maximum over $|\mu|=r$. For a vector valued function
 we apply the MMP componentwise.})
 on the disk $|\mu|\le\mu_1$ for $\mu=1$ implies that
\[
\bigl\|\Phi^1_{X_{1}}-F \bigr\|_{D_0}  
\le   \frac{ 2\epsilon\mu_1}{\mu_1^{2}}
=
 \frac{2\epsilon^2 }{\delta}.
\]
{\bf 2. Bounds for the finite differences in $D_{\delta/3}$.}
Now we consider the contribution of higher order corrections to the interpolating
vector fields.
Let $m\ge 2$ and
\[
\mu_m=\frac{2\delta}{3\epsilon (m-1)}.
\]
Suppose $|\mu|\le \mu_m$, $x_0\in D_{\delta/3}$ and $0\le k\le m-1$.
The finite induction in $k$ implies that $x_k:=f_\mu^k(x_0)\in D_{\delta}$ 
and, consequently, $|x_{k+1}-x_k|\le |\mu|\epsilon $. 
Then the definition \eqref{Eq:finite-diff}
implies that 
\[
    \Delta_{m}(x_0)=\sum_{k=0}^{m-1} (-1)^{m-k-1} \binom{m-1}{k}\Delta_1(x_k).
\]
Since $\sum_{k=0}^{m-1}\binom{m-1}{k}=2^{m-1}$ and $|\Delta_1(x_k)|\le |\mu|\epsilon$, the triangle inequality gives us
\[
	\left\| \Delta_{m}\right\|_{D_{\delta/3}}
	\le  
	2^{m-1}|\mu|\epsilon\,.
\]
We note that  $\Delta_k(x)=\Delta_{k-1}(f_\mu(x))-\Delta_{k-1}(x)$
and, since $f_\mu(x)=x+O(\mu)$, the Taylor series in $\mu$ for $\Delta_k$ start
with a higher order than the Taylor series for $\Delta_{k-1}$.
We conclude that   $\Delta_{m}(x_0)=O(\mu^{m})$.
Applying the MMP we get 
\[
\left\|\Delta_{m}\right\|_{D_{\delta/3}}
\le
2^{m-1}\mu_{m}\epsilon \,
\left(\frac{|\mu|}{\mu_{m}}\right)^{m} \,. 
\]
This bound also holds for $m=1$.

{\bf 3. Bounds for $\bigl\|X_{m,\mu}\bigr\|_{D_{\delta/3}}$ with $2\le m\le M_{\epsilon/\delta}$.}
Let the interpolating vector field $X_{m,\mu}$ be defined 
by \eqref{Eq:interpol_VF} with $f$ replaced by $f_\mu$.
We immediately conclude that 
  $X_{m,\mu}$ is analytic in $D_{\delta/3}$
for $|\mu|\le \mu_{m}$. Since $\mu_k$ are decreasing, 
we get  the following upper bound 
for  $|\mu|\le \mu_m/4$:
\[
\begin{split}
 \bigl\|X_{m,\mu}\bigr\|_{D_{\delta/3}}
&
\le\sum_{k=1}^m \frac{ \left\|\Delta_k\right\|_{D_{\delta/3}}}{k}
\le
\epsilon\sum_{k=1}^m \frac
{2^{k-1}|\mu|^k}
{k\mu_{k}^{k-1}} 
\le
\epsilon\sum_{k=1}^m \frac
{2^{k-1}|\mu|^k}
{\mu_{m}^{k-1}} 
\le
\frac{\epsilon |\mu|}{1-\frac{2|\mu|}{\mu_m}}
\le 
2\epsilon |\mu|.
\end{split}
\]
Since $\mu_m>4$ for $2\le m\le M_{\epsilon/\delta}$  we can substitute $\mu=1$ and get
\[
\|X_{m}\|_{D_{\delta/3}}\le 2 \epsilon.
\]
{\bf 4. Approximation of $F$ by the time-one map of $X_{m}$ in $D_0$.}
Recalling the definition of $\mu_m$ we observe that the inequality 
\(
2\epsilon|\mu|
 \le\frac{\delta}{3(m-1)}\le\frac{\delta}{3}
\) follows from $|\mu|\le \mu_m/4$.
Consequently, an orbit of the vector field $X_{m,\mu}$ with an initial condition  in $D_0$ remains in $D_{\delta/3}$ during one unit of time and
$$
\bigl\|\Phi^1_{X_{m,\mu}}-\operatorname{id} \bigr\|_{D_0}\le \|X_{m,\mu}\|_{D_{\delta/3}}\le 2\epsilon|\mu|\,.
$$
Recalling that  $\|f_\mu - \operatorname{id} \|_{D_0} 
 \leq |\mu| \epsilon$ and using the triangle inequality we get
$$
\bigl\|\Phi^1_{X_{m,\mu}}-f_\mu \bigr\|_{D_0}\le \bigl\|\Phi^1_{X_{m,\mu}}-\operatorname{id} \bigr\|_{D_0}+\bigl\|\operatorname{id} -f_\mu \bigr\|_{D_0} \le 
3\epsilon|\mu|.
$$
Theorem~\ref{Thm:formal} implies that
$\Phi^1_{X_{m,\mu}}$ has the same Taylor polynomial of degree $m$ in $\mu$ as the map
 $f_\mu$. Then
the MMP  based on the bound in the disk 
$|\mu|\le \mu_m/4$ can be applied with $\mu=1$ to get the desired upper bound \eqref{Eq:Phi-f}:
\[
\bigl\|\Phi^1_{X_{m}}-F \bigr\|_{D_0}  
\le   3\epsilon \left( \frac{ 4}{\mu_{m}}\right)^{m}
=
 3\epsilon\,\left(  \frac{6\epsilon (m-1)}{\delta}\right)^{m} .
\]
In order to obtain the exponentially small approximation error we note that the right-hand side 
of the last inequality depends on $m$ and takes the least value 
 near  
$m=M_{{\epsilon/\delta}}=\left\lfloor \delta/(6 \mathrm e\epsilon)\right\rfloor+1$.
Since
 \(
\frac{\delta}{6\epsilon (M_{\epsilon/\delta}-1)}
\ge \mathrm e
\)
we conclude that for this $m$
$$
\bigl\|\Phi^1_{X_m}-F \bigr\|_{D_0} 
\le 3 \epsilon \, \mathrm e^{-M_\epsilon} 
\le 3  \,
\epsilon
\exp
\left(-
\frac{\delta}{6\mathrm e \epsilon}
\right).
$$
This argument completes the proof of the theorem.
\end{proof}

\begin{remark}[Symmetric interpolation schemes]
Theorem~\ref{Thm:alaNeishtadt} uses the forward interpolation scheme. A similar statement can be proved for any
other interpolating scheme provided the map and its inverse 
are both  $\epsilon$-close to the identity on the same domain.
The most interesting case is related to symmetric interpolation schemes as they potentially lead to more accurate approximations.  The analysis
of the inverse map is particularly simple in the class of reversible maps, see the motivating example in Section~\ref{Se:henon} and the paper \cite{GelfreichV18}
where the method was used to study Arnold diffusion
in symplectic maps.    
\end{remark}

\section{Application to tangent-to-identity maps
\label{Se:fixedpoint}}

Suppose that a local diffeomorphism $f$ has a fixed point
at the origin, $f(0)=0$. If additionally $f'(0)=I$,
we say that $f$ is  {\em tangent to the identity}. The
study of the corresponding dynamics in a complex neighbourhood
of the fixed point is a beautiful and complex subject,
non-trivial even in the one-dimensional case,
see for example \cite{Voronin1981,Ecalle1985,Gelfreich1998,Hakim1998,Abate2001}.
 It is well known that a tangent to the identity map can be formally embedded into an
 autonomous flow. However, in general, an autonomous flow can only provide an approximation up to an error exponentially small compared to the distance to the origin.
Theorem~\ref{Thm:alaNeishtadt} can be used to 
reproduce this statement. In this section we will derive sharper estimates  
taking into account  the  dynamics of the map in a complex neighbourhood of the origin.
The main contribution of our approach is
the uniformity of the  bounds.

Let us state the assumptions formally.
Consider the space of local analytic diffeomorphisms  $f$ 
such that the Taylor expansion of $f-\operatorname{id}$ 
does not have any term of an order less than ${n_0}\ge 2$,
i.e. $f(x)-x=O(x^{n_0})$  (or equivalently $\operatorname{val}(f-\operatorname{id})\ge n_0$).
\footnote{
The degree of the first non-zero monomial of the Taylor expansion in 
 $x$ defines a valuation, that will be denoted by $\operatorname{val}$, of the ring
 of formal series $\mathbb{C}[[x]]$.}
The inverse function theorem implies that 
 the inverse map $f^{-1}$ has similar properties.
 Let $r_0,\epsilon_0>0$ and
consider the set $B(r_0,\epsilon_0,{n_0})$ 
 defined by the inequality 
\begin{equation}\label{Eq:balleps0}
\max\left(\|f-\id\|_{r_0},\|f^{-1}-\id\|_{r_0}\right)\le \epsilon_0,
\end{equation}
where the supremum norm $\|\cdot\|_{r_0}$ is taken over 
the ball $\{x\in\C^d:|x|\le r_0 \}$.
Similar to the previous section we use the maximum 
component norm for vectors. 

For an integer $m\ge1 $ the Stirling--Newton symmetric interpolation scheme
provides an explicit expression for 
  symmetric interpolating vector field of order $2m$,
 \begin{equation}\label{Eq:Xm-symmetric}
X_{2m}(x) = \sum_{k=0}^{m-1} \frac{(-1)^k (k!)^2}{(2k+1)!} \cdot 
\frac{\Delta^{2k+1} (x_{-k-1}) + \Delta^{2k+1} (x_{-k})}{2},
\end{equation}
where $x_k=f^k(x)$ are points on the orbit of $x$
and the finite differences are defined recursively: $\Delta ^{j+1}(x)=\Delta ^{j}(f(x))-\Delta ^{j}(x)$ and $\Delta^1(x)=f(x)-x$.

\begin{thm}[Interpolating vector fields near the fixed point]\label{Thm:nearfixedpoint}
Let $\epsilon_0,r_0>0$, $\epsilon_0\le r_0$, and let $n_0$ be an integer such that ${n_0}\ge 2$. 
There is a constant $r_1\in (0,r_0)$ and a function 
$m:(0,r_1)\to \N$ such that 
\[
m(r)=  \left(\frac{r_0}{r}\right)^{n_0-1}
\left(\frac{1}{(n_0-1)\sqrt2\,\mathrm e^2 }
\cdot\frac{r_0}{\epsilon_0} +O\left(r^{n_0-1}\right)
\right)
\]
and  for any map $f\in B(r_0,\epsilon_0,n_0)$ and  
 any $r\in(0,r_1)$ 
the approximation error is bounded by 
\[
\bigl\|\Phi_{X_{m(r)}}^1-f\bigr\|_{r}
\le 4 \, \epsilon_0 \exp\left(-2 \, m(r)\right).
\]
\end{thm}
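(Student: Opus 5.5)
The plan is to repeat the complex-interpolation argument of Theorem~\ref{Thm:alaNeishtadt}, now for the symmetric scheme \eqref{Eq:Xm-symmetric}, which uses both $f$ and $f^{-1}$. The smallness parameter is no longer a uniform $\epsilon$. It is the local size of $f-\id$ on a small ball, which the valuation condition $\operatorname{val}(f-\id)\ge n_0$ makes of order $r^{n_0}$.

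\textbf{Step 1: local smallness.} Both $f-\id$ and $f^{-1}-\id$ vanish to order $n_0$ at the origin and are bounded by $\epsilon_0$ on $|x|\le r_0$. The Schwarz-type consequence of the MMP (the footnote in the proof of Theorem~\ref{Thm:alaNeishtadt}, applied on complex lines through the origin and componentwise) gives
\[
\|f^{\pm1}-\id\|_{\rho}\le \epsilon_0(\rho/r_0)^{n_0}, \qquad \rho<r_0 .
\]
We then set $\rho=r+\delta$ with $\delta$ proportional to $r$, say $\rho=\lambda r$ with $\lambda>1$ to be optimised. Put $\epsilon(r)=\epsilon_0(\lambda r/r_0)^{n_0}$ and $\delta=(\lambda-1)r$. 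The effective ratio is then
\[
\epsilon/\delta=\frac{\epsilon_0\lambda^{n_0}}{(\lambda-1)r_0}\,(r/r_0)^{n_0-1}.
\]
This already explains the factor $(r_0/r)^{n_0-1}$ in $m(r)$. The constant $r_1$ is chosen so that $\epsilon/\delta\le c$ for the relevant small constant $c$, and so that orbit segments stay inside the ball.

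\textbf{Step 2: homotopy and bounds on the symmetric differences.} As before, consider $f_\mu=(1-\mu)\id+\mu f$. For the backward part we need a family through $f^{-1}$, so I would instead use the homotopy $g_\mu$ with $g_\mu=\id+\mu(f-\id)$ and its inverse $g_\mu^{-1}$. The inverse exists on a slightly smaller ball by a contraction/IFT argument when $|\mu|\epsilon\ll\delta$, and satisfies $\|g_\mu^{-1}-\id\|\le C|\mu|\epsilon$. Alternatively, one can use the bound on $f^{-1}$ and note that $g_1^{-1}=f^{-1}$.

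Using the orbit $x_{-m},\dots,x_m$, we show by finite induction that it stays in $D_\delta$ for $|\mu|\le\mu_m\sim\delta/(\epsilon m)$. Then $\|\Delta^{j}\|\le 2^{j-1}|\mu|\epsilon$, and since $\Delta^j=O(\mu^j)$ the MMP improves this to a bound $\lesssim \epsilon\mu_m(2|\mu|/\mu_m)^j$. The weights $(k!)^2/(2k+1)!\le 4^{-k}$ are summable against these bounds. This gives $\|X_{2m,\mu}\|\le C\epsilon|\mu|$ on $|\mu|\le c\mu_m$.

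\textbf{Step 3: MMP in $\mu$.} Theorem~\ref{Thm:formal}, valid for any interpolation scheme, shows that $\Phi^1_{X_{2m,\mu}}-f_\mu=O(\mu^{2m+1})$. The difference is bounded by $C\epsilon|\mu|$ on the disk. Evaluating at $\mu=1$ gives
\[
\|\Phi^1_{X_{2m}}-f\|_r\le C\epsilon\,(K m\epsilon/\delta)^{2m}.
\]
We then choose $m$ so that $Km\epsilon/\delta\approx \mathrm e^{-1}$, which makes the error $C\epsilon \mathrm e^{-2m}$. Since $\epsilon\le\epsilon_0$, this yields the factor $4\epsilon_0$.

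\textbf{Main obstacle.} The hard part is the sharp constant $\frac{1}{(n_0-1)\sqrt2\,\mathrm e^2}$. Getting it requires tracking the constants carefully: the $2$ in the doubled exponent $2m$, the symmetric weights (which produce the $\sqrt2$), and the optimisation over $\lambda$. Minimising $\lambda^{n_0}/(\lambda-1)$ gives $\lambda=n_0/(n_0-1)$. Near this optimum $\lambda^{n_0}\to\mathrm e$ as the leading behaviour, giving the factor $(n_0-1)\mathrm e$ together with one further $\mathrm e$ from the optimal choice of $m$. The $O(r^{n_0-1})$ correction absorbs the integer part and the lower-order losses.

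I expect the delicate points to be making the inverse-branch bounds for the complex homotopy uniform, and matching the constant $\sqrt2$ exactly. My first attempt would be to bound $\|\Delta^{2k+1}(x_{-k})\|$ via orbits of length $2k+1$ centred at $x$, so that the effective step count is $m$ rather than $2m$. I expect this is the source of the improvement over the forward scheme.
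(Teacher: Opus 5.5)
\textbf{There is a genuine gap: your route cannot reach the stated $m(r)$.} Your outline would plausibly prove an exponentially small bound, but only with a worse constant. The step where you claim to recover $\frac{1}{(n_0-1)\sqrt2\,\mathrm e^2}$ is incorrect.

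With $\epsilon=\epsilon_0(\lambda r/r_0)^{n_0}$ and $\delta=(\lambda-1)r$, the optimal choice $\lambda=n_0/(n_0-1)$ gives
\[
\frac{\delta}{\epsilon}\le \frac{1}{(n_0-1)\bigl(1+\frac{1}{n_0-1}\bigr)^{n_0}}\,\frac{r_0}{\epsilon_0}\Bigl(\frac{r_0}{r}\Bigr)^{n_0-1}.
\]
Here $(1+\frac{1}{n_0-1})^{n_0}$ is a fixed number strictly larger than $\mathrm e$; it equals $4$ for $n_0=2$. Since $\lambda$ does not depend on $r$, nothing tends to $\mathrm e$.

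Your choice $Km\epsilon/\delta=\mathrm e^{-1}$ then gives the leading coefficient $\bigl(K\mathrm e\,(n_0-1)(1+\frac{1}{n_0-1})^{n_0}\bigr)^{-1}\frac{r_0}{\epsilon_0}$. Matching the theorem would force $K=\sqrt2\,\mathrm e\,(1+\frac{1}{n_0-1})^{-n_0}<\sqrt2$, and even $K<1$ when $n_0=2$. But with only the uniform bound $|\mu|\epsilon$ on the $\lambda r$-ball, $m$-step orbits of $f_\mu$ are controlled only for $|\mu|\lesssim \delta/(m\epsilon)$. So $K\ge 1$ is built into the method. The bookkeeping of Theorem~\ref{Thm:alaNeishtadt} (passing to $\mu_m/4$ and splitting $\delta$) actually gives $K=6$.

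The underlying loss is that a uniform $\epsilon$ on a fixed ball ignores how much smaller the displacement $\epsilon_0(|y|/r_0)^{n_0}$ is near the origin. The true escape time from radius $r$ to $r_0$ is given by $\dot r=\epsilon_1 r^{n_0}$ with $\epsilon_1=\epsilon_0/r_0^{n_0}$. To leading order it is $(1+\frac{1}{n_0-1})^{n_0}$ times your optimised $\delta/\epsilon$.

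\textbf{The missing idea is to drop the auxiliary parameter and apply the MMP in the phase variable $x$.} The order of vanishing at the fixed point then plays the role of the order in $\mu$. The paper proceeds as follows.
\begin{enumerate}
\item It compares iterates of $f^{\pm1}$ with $\dot r=\epsilon_1 r^{n_0}$ on the whole ball $|x|\le r_0$. This gives radii $\alpha_k=r_0\bigl(1+\frac{\epsilon_0}{r_0}(n_0-1)k\bigr)^{-1/(n_0-1)}$ from which $k$ forward and backward iterates stay in that ball.
\item On $|x|\le\alpha_k$ the $k$-th symmetric difference term is bounded by $2^{2k}\epsilon_0$ and has valuation at least $(2k+1)(n_0-1)+1$. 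Hence on $|x|\le r$ it is at most $2^{2k}\epsilon_0(r/\alpha_k)^{(2k+1)(n_0-1)+1}$.
\item The Stirling bound on the weights only cancels the factor $2^{2k}$. Consecutive terms decrease by a factor $\frac12$ once $(r/\alpha_k)^{2(n_0-1)}\le 1/(2\mathrm e^2)$, where the $\mathrm e^2$ comes from $(\alpha_{k-1}/\alpha_k)^{\approx 2k}$. This ratio test, not the weights, is where the $\sqrt2\,\mathrm e$ in $\beta_m=\alpha_{m-1}(\sqrt2\,\mathrm e)^{-1/(n_0-1)}$ comes from. It yields $\|X_{2m}\|_r\le\sqrt{2\pi}\epsilon_0(r/r_0)^{n_0}$ for $r\le\beta_m$.
\item A second ODE comparison for $\dot x=X_{2m}$ gives $\gamma_m$. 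The MMP in $x$, using $\operatorname{val}(\Phi^1_{X_{2m}}-f)\ge 2m(n_0-1)+1$, then gives the bound $4\epsilon_0(r/\gamma_m)^{2m(n_0-1)+1}$.
\item Taking $m$ maximal with $(\gamma_m/r)^{n_0-1}\ge\mathrm e$ produces the factor $\mathrm e^{-2m}$. Together with $\gamma_m^{n_0-1}\sim r_0^{n_0}/\bigl(\sqrt2\,\mathrm e\,(n_0-1)\epsilon_0 m\bigr)$, this gives exactly the stated asymptotics of $m(r)$.
\end{enumerate}
Your closing idea of centred orbits of length $2k+1$ is in the spirit of $\alpha_k$. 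To get the theorem as stated, it has to be combined with this spatial MMP and non-uniform escape radii, not with the $\mu$-homotopy.
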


The proof of the theorem is split into several steps. First we obtain
an estimate for escape times from a neighborhood
of the origin in $\C^d$.
These bounds are based on comparison with the ordinary differential equation
\(
\dot R=R^{n_0}
\)
that has an obvious  strictly increasing positive solution 
\[
R(t)=\left(1-(n_0-1)t\right)^{-1/(n_0-1)}
\]
defined for $t<1/(n_0-1)$. We will also need its inverse function
$T:\mathbb R_+\to \mathbb R$, 
\[
T(r)=\frac1{n_0-1}\left(1-r^{-(n_0-1)}\right).
\]

\begin{lemma}[Escape times]\label{Le:escape}
If a map $f\in B(r_0,\epsilon_0,n_0)$ and $k\in\N$, then 
$|f^{k}(x)|\le r_0$  provided   $|x|\le \alpha_k$
where
 \begin{equation}\label{Eq:rho_k}
 \alpha_k=r_0\left(1+\frac{\epsilon_0}{r_0}(n_0-1)k\right)^{-1/(n_0-1)}.
 \end{equation}
 \end{lemma}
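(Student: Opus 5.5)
The plan is to prove a one-step growth bound for $|f(x)|$ from a Schwarz-type lemma, and then iterate it by comparing with the ODE $\dot R=R^{n_0}$ through the convex function $\rho\mapsto\rho^{-(n_0-1)}$.

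\textbf{Step 1: one-step growth bound.} Put $g=f-\id$. Since we use the maximum component norm, the ball $\{|x|\le r_0\}$ is a polydisk. Each component $g_i$ is analytic there, satisfies $|g_i|\le\epsilon_0$ by \eqref{Eq:balleps0}, and has a Taylor expansion with no terms of order below $n_0$. Fix $x\neq0$ with $|x|\le r_0$ and consider the one-variable function $\lambda\mapsto g_i(\lambda x/|x|)$ on $|\lambda|\le r_0$. It vanishes to order $n_0$ at $\lambda=0$ and is bounded by $\epsilon_0$. The MMP, in the form stated in the footnote to the proof of Theorem~\ref{Thm:alaNeishtadt}, evaluated at $\lambda=|x|$, then gives
\[
|f(x)-x|\le \epsilon_0\left(\frac{|x|}{r_0}\right)^{n_0},\qquad\text{hence}\qquad |f(x)|\le |x|+\epsilon_0\frac{|x|^{n_0}}{r_0^{n_0}} .
\]
In the normalised variable $\rho=|x|/r_0$ and with $a=\epsilon_0/r_0$, this reads $\rho(f(x))\le \rho+a\rho^{n_0}$, valid whenever $\rho\le1$.

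\textbf{Step 2: discrete comparison with the ODE.} Let $\varphi(\rho)=\rho^{-(n_0-1)}$, which is decreasing and convex on $\rho>0$. By convexity,
\[
\varphi(\rho+a\rho^{n_0})\ge\varphi(\rho)+\varphi'(\rho)\,a\rho^{n_0}=\varphi(\rho)-(n_0-1)a .
\]
This is the discrete analogue of the fact that $R^{-(n_0-1)}$ decreases at exactly the rate $n_0-1$ along solutions of $\dot R=R^{n_0}$. Because $\varphi$ is decreasing, the inequality $\rho_{j+1}\le\rho_j+a\rho_j^{n_0}$ from Step 1 gives $\varphi(\rho_{j+1})\ge\varphi(\rho_j)-(n_0-1)a$, where $\rho_j=|f^j(x)|/r_0$.

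\textbf{Step 3: induction.} Take $|x|\le\alpha_k$, so that $\varphi(\rho_0)\ge 1+(n_0-1)ak$. I claim by induction on $j=0,\dots,k$ that
\[
\varphi(\rho_j)\ge 1+(n_0-1)a(k-j).
\]
In particular $\varphi(\rho_j)\ge1$, i.e.\ $\rho_j\le1$, so $f^j(x)$ lies in the ball of radius $r_0$. This is exactly what is needed to apply Step 1 at the next iterate, which carries the induction forward. At $j=k$ the claim gives $|f^k(x)|\le r_0$.

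The only delicate point is this induction: the Schwarz bound of Step 1 may only be used while the orbit stays in the ball of radius $r_0$. The argument handles it by carrying the stronger invariant $\varphi(\rho_j)\ge1+(n_0-1)a(k-j)$ rather than just $\rho_j\le1$. Equivalently, one can use that $\alpha_j$ is decreasing in $j$, so $|x|\le\alpha_k\le\alpha_j$ for every $j\le k$. The case $x=0$ is trivial, since $0$ is a fixed point.
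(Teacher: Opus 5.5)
Your proof is correct and is essentially the paper's argument: the same MMP (Schwarz) bound $|f(x)-x|\le\epsilon_0(|x|/r_0)^{n_0}$, followed by comparison with $\dot r=\epsilon_1 r^{n_0}$, $\epsilon_1=\epsilon_0/r_0^{n_0}$. Your convexity step for $\rho^{-(n_0-1)}$, a quantity that is affine in time along solutions of that ODE, just makes explicit the paper's unproved assertion that the iterates are dominated by $R(\epsilon_1 t)$. Since \eqref{Eq:balleps0} is symmetric in $f$ and $f^{-1}$, the same induction applied to $f^{-1}$ also gives $|f^{-j}(x)|\le r_0$ for $0\le j\le k$; the paper's proof records these backward bounds, and the next lemma uses them.
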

\begin{proof}
The MMP implies that for any $r\in [0,r_0]$
\[
\max\left(\bigl\|f-\id\bigr\|_{r},\bigl\|f^{-1}-\id\bigr\|_{r}\right)\le  \epsilon_0 (r/r_0)^{n_0}. 
\]
Let $\epsilon_1=\epsilon_0/r_0^{n_0} $.
The iterates of the map $f$ and its inverse $f^{-1}$
are dominated by the solution 
$r(t)=R(\epsilon_1 t)$ of the differential equation
$\dot r=\epsilon_1r^{n_0}$ while the point remains inside the domain of the map. If $t_0=\epsilon_1^{-1}T(r_0)$, then $R(\epsilon_1 t_0)=r_0$. 
For any $x$ with $|x|\le \alpha_k:=R(\epsilon_1 (t_0-k))$ 
we get that $\bigl|f^j(x)\bigr|\le
R(\epsilon_1 (t_0-k+j))\le r_0$
 for all $|j|\le k$. Using the explicit expressions for the functions $R$ and $T$ 
 we get the explicit expression \eqref{Eq:rho_k} for $\alpha_k$.
\end{proof}

\begin{lemma}[Interpolating vector fields]
The  symmetric interpolating vector field $X_{2m}(x)$
defined in \eqref{Eq:Xm-symmetric} with  $m\in\mathbb N$ 
is analytic in the disk $|x|\le \alpha_{m-1}$
and 
\begin{equation}\label{Eq:Xm-bound}
    \|X_{2m}\|_{r}\le \sqrt{2\pi}\epsilon_0\left(\frac{r}{r_0}\right)^{n_0}
\end{equation}
provided
\begin{equation}\label{Eq:beta_m}
r\le  \beta_m=\frac{\alpha_{m-1}}{(\sqrt 2\mathrm e)^{1/(n_0-1)}}.    
\end{equation}

\end{lemma}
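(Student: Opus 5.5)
The plan has two parts. First we show $X_{2m}$ is analytic on the disk $|x|\le\alpha_{m-1}$ and bound it crudely there. Then we transfer the bound to the small ball $|x|\le r$ using the Maximum Modulus Principle along complex lines through the origin, exploiting the high order of vanishing of the finite differences at the fixed point.

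\emph{Analyticity.} In \eqref{Eq:Xm-symmetric} the term with index $k$ involves the points $x_{-k-1},\dots,x_{k+1}$. For $k\le m-1$ these are $x_j$ with $|j|\le m$. Computing $x_{\pm m}$ requires applying $f^{\pm1}$ to $x_{\pm(m-1)}$, so it suffices that $|f^j(x)|\le r_0$ for $|j|\le m-1$. By Lemma~\ref{Le:escape} (which treats $f$ and $f^{-1}$ symmetrically) this holds for $|x|\le\alpha_{m-1}$. Hence every $x_j$, and therefore $X_{2m}$, is an analytic function of $x$ on that disk.

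\emph{Crude bound and vanishing order.} Write $\Delta^{2k+1}(y)=\sum_{i}(-1)^{2k-i}\binom{2k}{i}\Delta^1(f^i(y))$ with $|\Delta^1|\le\epsilon_0$ on the ball of radius $r_0$. This gives $|\Delta^{2k+1}(x_{-k})|,|\Delta^{2k+1}(x_{-k-1})|\le 4^k\epsilon_0$ for $|x|\le\alpha_{m-1}$. Next we need the order of vanishing. Since $f(x)-x$ has valuation $\ge n_0$, for any $g$ we have $g\circ f-g=g'(x)(f(x)-x)+\dots$. Hence each difference operator raises the valuation by at least $n_0-1$, so
\[
\operatorname{val}\Delta^{2k+1}\ge n_0+2k(n_0-1).
\]
The same holds after composing with $f^{-k}$ or $f^{-k-1}$, which preserve the origin and have derivative $I$ there. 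Fix a unit direction $u$ and apply the MMP footnote to $\lambda\mapsto\Delta^{2k+1}(x_{-k}(\lambda u))$ on $|\lambda|\le\alpha_{m-1}$. This yields
\[
\|\Delta^{2k+1}(x_{-k})\|_r\le 4^k\epsilon_0\Bigl(\frac{r}{\alpha_{m-1}}\Bigr)^{n_0+2k(n_0-1)} .
\]
For $r\le\beta_m$ we have $(r/\alpha_{m-1})^{n_0-1}\le 1/(\sqrt2\,\mathrm e)$, so the $k$-th term is bounded by $\epsilon_0(r/\alpha_{m-1})^{n_0}(2/\mathrm e^2)^k$ times the coefficient $(k!)^2/(2k+1)!$. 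The geometric series in $k$ then converges uniformly in $m$.

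\emph{Main obstacle.} The prefactor comes out as $(r/\alpha_{m-1})^{n_0}$, whereas the claim has $(r/r_0)^{n_0}$, and $\alpha_{m-1}<r_0$. I would handle this by treating the $k=0$ term separately: $\Delta^1(x_0)$ and $\Delta^1(x_{-1})$ equal $f(y)-y$ at $|y|$ close to $r$. For these, the MMP estimate $\|f-\id\|_\rho\le\epsilon_0(\rho/r_0)^{n_0}$ from the proof of Lemma~\ref{Le:escape} gives the correct scale directly. For $k\ge1$, I would split the exponent as
\[
\Bigl(\frac{r}{\alpha_{m-1}}\Bigr)^{n_0+2k(n_0-1)}=\Bigl(\frac{r}{r_0}\Bigr)^{n_0}\Bigl(\frac{r_0}{\alpha_{m-1}}\Bigr)^{n_0}\Bigl(\frac{r}{\alpha_{m-1}}\Bigr)^{2k(n_0-1)}.
\]
The extra factor $(r_0/\alpha_{m-1})^{n_0}$ is then absorbed either by one power of $(r/\alpha_{m-1})^{n_0-1}$, or by the sharper bound $4^k\epsilon_0(\rho/r_0)^{n_0}$ for $\Delta^1$ evaluated on the smaller ball $\rho$ reached by orbits started in $|x|\le\beta_m$, which is controlled by the comparison ODE $\dot R=R^{n_0}$. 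The remaining work is bookkeeping the constants: the bound $(k!)^2 4^k/(2k+1)!\le\sqrt{\pi}/(2\sqrt{k})$ from Stirling's formula, combined with the $\sqrt2\,\mathrm e$ margin built into $\beta_m$, should sum to at most $\sqrt{2\pi}$. Getting exactly this constant is the step I expect to require the most care.
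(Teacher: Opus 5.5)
Your analyticity argument, the valuation count and your separate treatment of the $k=0$ term are fine. The step you flag as the main obstacle, however, is a genuine gap rather than constant bookkeeping, and neither proposed fix closes it.

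Applying the MMP to every term on the common disk $|x|\le\alpha_{m-1}$ leaves, for each $k\ge1$, the factor
\[
\Bigl(\frac{r_0}{\alpha_{m-1}}\Bigr)^{n_0}=\Bigl(1+\frac{\epsilon_0}{r_0}(n_0-1)(m-1)\Bigr)^{n_0/(n_0-1)},
\]
which is unbounded in $m$. At $r=\beta_m$ the remaining factor $(r/\alpha_{m-1})^{2k(n_0-1)}$ equals $(2\mathrm e^2)^{-k}$, independently of $m$. So your $k=1$ estimate at $r=\beta_m$ is already of order $m^{n_0/(n_0-1)}\epsilon_0(r/r_0)^{n_0}$. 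The power $(r/\alpha_{m-1})^{n_0-1}\le 1/(\sqrt2\,\mathrm e)$ is only a fixed constant and cannot absorb it.

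Your second option is mis-specified. A bound on $\Delta^1$ obtained from orbits started in $|x|\le\beta_m$ holds only on $|x|\le\beta_m$, so it cannot be fed into the MMP on a larger disk. Without the MMP gain the terms decay only like $k^{-1/2}$, giving an estimate of order $\sqrt m\,\epsilon_0(\beta_m/r_0)^{n_0}$ at $r=\beta_m$. A coherent version would start the orbits on the MMP disk, where the relevant points lie in $|x|\le\alpha_{m-1-k}$, and would still have to control $(\alpha_{m-1-k}/\alpha_{m-1})^{n_0}$; the proposal does not do this.

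The missing idea is to let the disk depend on $k$, i.e.\ to do for every $k$ what you did for $k=0$. The $k$-th term of \eqref{Eq:Xm-symmetric} uses only $x_{-k-1},\dots,x_{k+1}$. By Lemma~\ref{Le:escape}, its averaged finite difference is therefore analytic and bounded by $4^k\epsilon_0$ on the larger disk $|x|\le\alpha_k$. The MMP on that disk, together with the Stirling bound $(k!)^2/(2k+1)!\le\sqrt{2\pi}/(\sqrt{2k+1}\,2^{2k+1})$, bounds the $k$-th term by
\[
\frac{\sqrt{2\pi}\,\epsilon_0}{2\sqrt{2k+1}}\Bigl(\frac{r}{\alpha_k}\Bigr)^{(2k+1)(n_0-1)+1}.
\]
Since $\alpha_0=r_0$, the $k=0$ term already carries $(r/r_0)^{n_0}$.

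Decay in $k$ then comes from a ratio test rather than a common geometric factor. Consecutive terms have ratio at most
\[
\Bigl(\frac{r}{\alpha_k}\Bigr)^{2(n_0-1)}\Bigl(\frac{\alpha_{k-1}}{\alpha_k}\Bigr)^{(2k-1)(n_0-1)+1}.
\]
The paper bounds the second factor by $\mathrm e^2$ using the explicit form of $\alpha_k$ and $(1+1/t)^t\le\mathrm e$. It bounds the first by $(r/\alpha_{m-1})^{2(n_0-1)}\le 1/(2\mathrm e^2)$ for $r\le\beta_m$. Hence the ratio is at most $1/2$, and the sum is at most twice the $k=0$ term, i.e.\ at most $\sqrt{2\pi}\,\epsilon_0(r/r_0)^{n_0}$. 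This is where the factor $\sqrt2\,\mathrm e$ in $\beta_m$ comes from.
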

\begin{proof} 
The interpolating vector field $X_{2m}(x)$
is analytic in the disk $|x|\le \alpha_{m-1}$ as it 
is a linear combination of the points
\(f^{-m}(x),\ldots,x,\dots,f^m(x)\)
and $|f^k(x)|\le r_0$ for $|k|< m-1$.

In order to get the upper bound we  
expand expressions for the finite  differences:
\[
\frac{\Delta^{2k+1} (x_{-k-1}) +\Delta^{2k+1} (x_{-k})}{2}
=
 \sum_{i=-k}^{k} (-1)^{i+k} \binom{2k}{i+k} \, \frac{\Delta^1( x_{i-1})+\Delta^1( x_i)}{2}.
\]
Lemma~\ref{Le:escape} implies that 
 the first $k$ iterates (both forwards and backwards) of a point $x$ with $|x|\le \alpha_{k}$ remain inside the ball of radius $ r_0$. Consequently
 $|\Delta x_i|=|x_{i+1}-x_i|\le\epsilon_0 $ for $i=-k-1,\ldots,k$ and
\[
\left\|
\frac{\Delta^{2k+1} (x_{-k-1}) + \Delta^{2k+1} (x_{-k})}{2}
\right\|_{r}
\le
\sum_{i=-k}^{k}  \binom{2k}{i+k} \epsilon_0
=2^{2k}\epsilon_0.
\]
Since $\Delta^j(x_i)=\Delta^{j-1}(x_{i+1})-\Delta^{j-1}(x_{i})$ and each application of $\Delta$ increases valuation
of a series by $n_0-1$, we get that
$
\operatorname{val}(\Delta^{2k+1} (x_j))\ge (2k+1)(n_0-1)+1.
$
Then the MMP implies that for $r\le \alpha_{k}$
\[
\left\|
\frac{\Delta^{2k+1} (x_{-k-1}) + \Delta^{2k+1}( x_{-k})}{2}
\right\|_{r}
\le
2^{2k}\epsilon_0\left(\frac{r}{\alpha_{k}}\right)^{(2k+1)(n_0-1)+1}
.
\]
Using the Stirling formula 
$
\sqrt{2\pi n}\left(\frac{n}{e}\right)^{n}
<
n!
<
\sqrt{2\pi n}\left(\frac{n}{e}\right)^{n} e^{\tfrac{1}{12n}}
$
and the classical bound $(1-1/n)^n=e^{n\log(1-1/n)}\le e^{-1-1/(2n)}$
we get for any $k \geq 1$
\[
\begin{split}
\frac{(k!)^2}{(2k+1)!}
&
\le
\frac{e^{1+1/(6k)}\sqrt{2\pi} }{\sqrt{\,2k+1\,}}
\left(\frac{k}{2k+1}\right)^{2k+1}
\\&
=
\frac{e^{1+1/(6k)}\sqrt{2\pi}}{\sqrt{2k+1}\,2^{2k+1}}
\left(1-\frac{1}{2k+1}\right)^{2k+1}
\le 
\frac{\sqrt{2\pi}}{\sqrt{2k+1}\,2^{2k+1}},
\end{split}
\]
Obviously this inequality also holds for $k=0$.
Using the last two bounds and  \eqref{Eq:Xm-symmetric}
we get that for $r\le \alpha_{m-1}$
\[
\begin{split}
\bigl\|X_{2m}\bigr\|_{r}&\le  
\sum_{k=0}^{m-1} 
\frac{\sqrt{2\pi}\epsilon_0}{2\sqrt{2k+1}}
\left(\frac{r}{\alpha_{k}}\right)^{(2k+1)(n_0-1)+1}
.
\end{split}
\]   
In order to estimate the ratio of two consecutive terms of the sum
we consider
\[
\begin{split}
\frac
{\left({r}/{\alpha_{k}}\right)^{(2k+1)(n_0-1)+1}}
{\left({r}/{\alpha_{k-1}}\right)^{(2k-1)(n_0-1)+1}}
&=\left(\frac{r}{\alpha_{k}}\right)^{2(n_0-1)}
\left(\frac{\alpha_{k-1}}{\alpha_{k}}\right)^{(2k-1)(n_0-1)+1}
\\
&
=
\left(\frac{r}{\alpha_{k}}\right)^{2(n_0-1)}
\left(
\frac{1+\frac{\epsilon_0}{r_0}(n_0-1)k
}{
1+\frac{\epsilon_0}{r_0}(n_0-1)(k-1)
}
\right)
^{(2k-1)+1/(n_0-1)}
\\
&=
\left(\frac{r}{\alpha_{k}}\right)^{2(n_0-1)}
\left(
1+
\frac{1
}{
\frac{1}{\frac{\epsilon_0}{r_0}(n_0-1)}+
k-1
}
\right)^{2k-1+1/(n_0-1)}
\end{split}
\]
Using the inequality $(1+t^{-1})^t\le \mathrm e$ for $t>0$
we conclude that the ratio of two consercutive terms in the sum does not exceed $1/2$ 
provided $(r/\alpha_k)^{2(n_0-1)}\le   1/(2\mathrm e^2)$.
  Consequently, if $r\le\beta_m$
 the sum is not larger than the first term doubled and we get
 the  bound~\eqref{Eq:Xm-bound}.
\end{proof}

\begin{lemma}[Time-one maps]
The time-one map of the symmetric interpolating vector field $X_{2m}$ 
defined in \eqref{Eq:Xm-symmetric} with  $m\in\mathbb N$  satisfies the inequality
\begin{equation}\label{Eq:PhiXm-f}
    \left\|\Phi_{X_{2m}}^1-f\right\|_r
    \le
        4\epsilon_0\left(\frac{\beta_m}{r_0}\right)^{n_0}
    \left(
\frac{r}{\gamma_m}    \right)^{2 m(n_0-1)+1}
\end{equation}
for  $0<r\le \gamma_m$ where  
\begin{equation}\label{Eq:gamma_m}
   \gamma_m=\beta_m\left(1+(n_0-1)\epsilon_2\beta_m^{(n_0-1)}
\right)^{-1/(n_0-1)}
\end{equation}
and $\epsilon_2=\sqrt{2\pi}\epsilon_0/r_0^{n_0} $.
\end{lemma}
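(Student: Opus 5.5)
The plan is to mimic Step 4 of the proof of Theorem~\ref{Thm:alaNeishtadt}: first bound $\Phi^1_{X_{2m}}-f$ crudely on a ball where the flow stays inside the domain of $X_{2m}$, then improve the bound by exploiting the high valuation of the difference via the MMP.

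\textbf{Step 1: the flow stays in the domain of $X_{2m}$.} By the previous lemma, $X_{2m}$ is analytic for $|x|\le\beta_m$ and satisfies $\|X_{2m}\|_r\le \epsilon_2 r^{n_0}$ for $r\le\beta_m$, where $\epsilon_2=\sqrt{2\pi}\epsilon_0/r_0^{n_0}$. Hence $|x(t)|$ along a trajectory of $\dot x=X_{2m}(x)$ is dominated by the solution of $\dot r=\epsilon_2 r^{n_0}$, exactly as in Lemma~\ref{Le:escape}. Solving this comparison equation for one unit of time with terminal value $\beta_m$ gives the initial radius
\[
\beta_m\bigl(1+(n_0-1)\epsilon_2\beta_m^{n_0-1}\bigr)^{-1/(n_0-1)}=\gamma_m .
\]
So for $|x|\le\gamma_m$ the time-one map $\Phi^1_{X_{2m}}(x)$ is defined, its orbit stays in $|x|\le\beta_m$, and
\[
|\Phi^1_{X_{2m}}(x)-x|\le \|X_{2m}\|_{\beta_m}\le \sqrt{2\pi}\epsilon_0(\beta_m/r_0)^{n_0}.
\]

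\textbf{Step 2: crude bound on $\|\Phi^1_{X_{2m}}-f\|_{\gamma_m}$.} The MMP bound from the proof of Lemma~\ref{Le:escape} gives $\|f-\id\|_{\gamma_m}\le\epsilon_0(\gamma_m/r_0)^{n_0}\le\epsilon_0(\beta_m/r_0)^{n_0}$. Combining this with Step 1 by the triangle inequality,
\[
\|\Phi^1_{X_{2m}}-f\|_{\gamma_m}\le(\sqrt{2\pi}+1)\epsilon_0(\beta_m/r_0)^{n_0}\le 4\epsilon_0(\beta_m/r_0)^{n_0}.
\]

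\textbf{Step 3: valuation of the difference.} This is the main obstacle. I must show $\operatorname{val}(\Phi^1_{X_{2m}}-f)\ge 2m(n_0-1)+1$. To do so I will introduce the scaling $x=\lambda y$ and set $F_\lambda(y)=\lambda^{-1}f(\lambda y)$. Then $F_\lambda=\id+O(\lambda^{n_0-1})$, so with $\varepsilon=\lambda^{n_0-1}$ the family $F_\lambda$ is tangent to the identity in the sense of Section~\ref{Se:families}, and the interpolating vector fields commute with the scaling.

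Theorem~\ref{Thm:formal}, applied to the symmetric scheme of order $2m$, gives an error $O(\varepsilon^{2m+1})$. This only yields valuation $(2m+1)(n_0-1)+1$ if the expansion is in $\varepsilon$ alone; in general $F_\lambda$ is a series in $\lambda$, so I will instead work directly with the parameter $\lambda$. The argument of Theorem~\ref{Thm:formal} transfers: polynomial interpolation of degree $2m$ reproduces the formal flow $\Phi^t_{g}$ up to terms of order $t^{2m+1}$, and each power of $t$ carries a factor $\lambda^{n_0-1}$. Therefore $X_{2m}-g=O(\lambda^{(2m+1)(n_0-1)})$ in the scaled variables, which gives $\operatorname{val}(\Phi^1_{X_{2m}}-f)\ge (2m+1)(n_0-1)+1\ge 2m(n_0-1)+1$. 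The weaker exponent is all that the statement requires, so even a less careful count suffices. If the $\varepsilon$-family formulation is awkward, the fallback is to argue with formal power series in $x$: the formal embedding $f=\Phi^1_g$ holds with $\operatorname{val}(g)\ge n_0$, and each $t$-derivative $D_g^{k-1}g$ has valuation at least $k(n_0-1)+1$.

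\textbf{Step 4: improvement by the MMP.} For each fixed $x$ with $|x|=1$, consider $\lambda\mapsto\Phi^1_{X_{2m}}(\lambda x)-f(\lambda x)$ on the disk $|\lambda|\le\gamma_m$. Applying the footnoted MMP with the bound of Step 2 and the vanishing order of Step 3 yields \eqref{Eq:PhiXm-f} for all $0<r\le\gamma_m$.
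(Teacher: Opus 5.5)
Your proposal is correct and follows the paper's proof essentially step for step. The steps match: comparison with $\dot r=\epsilon_2 r^{n_0}$ to obtain $\gamma_m$, the crude bound $4\epsilon_0(\beta_m/r_0)^{n_0}$ on the ball of radius $\gamma_m$ via the triangle inequality, and the MMP along rays using the vanishing order of $\Phi^1_{X_{2m}}-f$. The one addition is that you justify the valuation bound, which the paper only asserts, through the formal embedding $f=\Phi^1_g$, and your count yields the stronger order $(2m+1)(n_0-1)+1$.
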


\begin{proof}
The bound \eqref{Eq:Xm-bound} implies that  solutions of the differential equation $\dot x=X_{2m}(x)$
are dominated by the function 
$r(t)=R(\epsilon_2 t)$ that satisfies  the differential equation
$\dot r=\epsilon_2r^{n_0}$ while the trajectory remains inside the ball $|x|\le\beta_m$. If $t_0=\epsilon_2^{-1}T(\beta_m)$, then $R(\epsilon_2 t_0)=\beta_m$. 
For any $x_0$ with $|x_0|\le \gamma_m:=R(\epsilon_2 (t_0-1))$ 
we get that $\bigl|\Phi^1_{X_{2m}}(x_0)\bigr|\le
R(\epsilon_2 (t_0))=\beta_m$.
Using the explicit expressions for $R(t)$ and $T(r)$
we get the expression \eqref{Eq:gamma_m}:
\[
\gamma_m=\left(1-(n_0-1)\epsilon_2(t_0-1)
\right)^{-1/(n_0-1)}
=\left((n_0-1)\epsilon_2+\beta_m^{-(n_0-1)}
\right)^{-1/(n_0-1)}.
\]
Then the time-one map of $X_{2m}$  admits the following upper bound
\[
\left\|
\Phi_{X_{2m}}^1-\id
\right\|_{\gamma_m}
\le 
\|X_{2m}\|_{\beta_m}\le
\sqrt{2\pi}\epsilon_0\left(\frac{\beta_m}{r_0}\right)^{n_0}
\]
and
\[
\|\Phi_{X_{2m}}^1-f\|_{\gamma_m}\le \|\Phi_{X_{2m}}^1-\id\|_{\gamma_m}+\|\id-f\|_{\gamma_m}
\le 
4\epsilon_0\left(\frac{\beta_m}{r_0}\right)^{n_0}
.
\]
Since $\operatorname{val}(\Phi_{X_{2m}}^1-f) \geq 2m(n_0-1)+1$,
the MMP implies 
\eqref{Eq:PhiXm-f} for $r\le \gamma_m$.
\end{proof}

\begin{proof}[Proof of Theorem~\ref{Thm:nearfixedpoint}]
The theorem is proved by choosing $m$  that approximately  minimizes the error in \eqref{Eq:PhiXm-f} for a fixed $r$.
We note that $\beta_m,\gamma_m\in(0,r_0)$  are monotone 
decreasing with $m$ and converge to zero when $m\to\infty$. We define $m(r)$ to be the 
largest $m\in\N$ such that $(\gamma_m/r)^{n_0-1}\ge \mathrm e $. Then 
we get  
\[
\left\|\Phi_{X_{2m}}^1-f\right\|_r
    \le
       4 \epsilon_0
        \exp(-2 m(r)).
\]
We note that for $m\ge1$
\[
\alpha_m
= {r_0}
{\left(\frac{\epsilon_0}{r_0}(n_0-1)m\right)^{-1/(n_0-1)}}
\left(1+O(m^{-1})\right).
\]
Since $\beta_m=\frac{\alpha_{m-1}}{(\sqrt 2\mathrm e)^{1/(n_0-1)}}$ 
and $\gamma_m=\beta_m\left(1+O(\beta_m^{n_0-1})\right)$ we conclude that
\[
\gamma_m= r_0\left(\frac{\epsilon_0}{r_0}\sqrt2 \mathrm e(n_0-1)m\right)^{-1/(n_0-1)}
\left(1+O(m^{-1})\right).
\]
Consequently, the equation ${\gamma_m}=r\mathrm e^{1/(n_0-1)}$ has a solution
\[
m=
\left(\frac{r_0}{r}\right)^{n_0-1}
\left(\frac{1}{ \sqrt 2  \mathrm e^2(n_0-1)}\frac{r_0}{\epsilon_0}
+O(r^{n_0-1})\right)
\]
and $m\ge1$ provided $r\le r_1=\gamma_1\mathrm e^{-1/(n_0-1)}$.
The function $m(r)$ is equal to the 
integer part of the right hand side.
\end{proof}

\section{Dynamics near a resonant equilibrium\label{Se:stability}}

\subsection{Stability of a fully resonant elliptic fixed point}

In this section we discuss an application of the theory developed in the previous section to 
study  stability of an elliptic resonant fixed point.
Suppose that an analytic symplectic map $f$ in a $2d$ dimensional space has
an equilibrium at the origin. Let $A_0=f'(0)$. 
The fixed point is called {\em elliptic\/} if all eigenvalues of $A_0$
are non-real numbers on the unit circle. 
Since the map is  real-analytic, the eigenvalues form pairs of 
complex conjugate numbers. Denote by $\lambda_1,\dots,\lambda_d$ the eigenvalues 
with positive imaginary parts.
 The fixed point is called {\em fully resonant\/}
if $\lambda_1^n=\lambda_2^n=\ldots=\lambda_d^n=1$ for some $n\in \N$.
Consider the $d$-dimensional lattice $\mathcal L\subset\Z^d$ that consists of all
 $m=(m_1,\dots,m_d)\in\Z^d$ such that
\[
\lambda_1^{m_1}\ldots\lambda_d^{m_d}=1.
\]
Suppose that all eigenvalues are simple. Then the matrix $A_0$ is diagonalizable and
the normal form theory (for example by combining the  results of \cite{Bazzani87} and \cite{BenettinG94})
tell us that there is  a formal Hamiltonian function 
\[
h_Y
=
\sum_{k-l\in\mathcal L}c_{k,l} z_1^{k_1}\ldots z_d^{k_d}\bar z_1^{l_1}\dots \bar z_d^{l_d}
\]
where $z_j=p_j+i q_j$ for $j=1,\ldots,d$, 
and a formal canonical change of variables 
such that the Taylor series of 
\[
f^n=C\circ \Phi^n_{h_Y}\circ C^{-1}
\]
where $\Phi^n_{h_Y}$ is the time-$n$ formal flow of the Hamiltonian
vector field defined by  $h_Y$.

The formal series $h_Y$ is called the {\em  normal form Hamiltonian\/} for the map $f$.
The real symmetry implies that $c_{k,l}=\bar c_{l,k}$. 
The normal form Hamiltonian has the group of symmetries, a finite group isomorphic to the group
 of matrices $A_0^k$ with $k=0,\ldots,n-1$, as $h_Y=h_Y\circ A_0^k$. In this equality we have identified the
 matrix with the corresponding linear map.
 We say that the resonance is strong if the lattice $\mathcal L$ contains a non-zero integral
 vector with $|m_1|+\ldots +|m_d|\le 4$.
In the absence of strong resonances, the leading part of the normal form Hamiltonian takes the form
\[
h_Y
=
\sum_{j=1}^d c_{jj} (z_j\bar z_j)^2
+O_5
\]
where the coefficients $c_{jj}$ are real. If $c_{jj}$ are all positive (or all  negative),  the leading part of $h_Y$ is a strictly convex function.
In the original coordinates, the formal series $h_X=h_Y\circ C^{-1}$ starts with terms of order four
and the Hamiltonian $h_X$ is convex in a neighbourhood of the origin.
In this case,  we can deduce effective stability for exponentially long times. 

Since $h_X$ starts with terms of order four, the corresponding time-$n$
map is $O(|x|^3)$ close to the identity. Consequently,
 the map $f^n$ satisfies assumptions of Theorem~\ref{Thm:nearfixedpoint} with $n_0=3$.
 Theorem~\ref{Thm:formal}
implies that the interpolating vector field $X_{m}$
of order $m$
coincides with the formal interpolating vector field in all orders less 
than  $N_m=m(n_0-1)+n_0=4m+1$.   
Truncating the Taylor series for $X_m$ at this order we get
a Hamiltonian vector field $\hat X_m$, the corresponding
Hamiltonian function coincides with $h_m$, the truncated normal form Hamiltonian
$h_{X}$ truncated at the order $N_m$. Then 
\[
|h_m\circ f^n(x)-h_m(x)|\le \left|h_m\circ \Phi^n_{\hat X_m}(x)-h_m(x)\right|+\left|h_m\circ \Phi^n_{\hat X_m}(x)-h_m\circ f^n(x)\right|.
\]
The first term vanishes as the Hamiltonian function is constant along the
trajectories of its Hamiltonian vector field. We conclude that 
\[
|h_m\circ f^n(x)-h_m(x)|\le \|h_m'\|_{r}\left\| \Phi^n_{\hat X_m}- f^n\right\|_r.
\]
Let us consider the set $\{x:h_m(x)\le E\}$ and denote 
by $D_m(E)$ the connected component that contains the origin.
Let \[E_m(r)=\sup_{E}\bigl\{E: x\in D_m(E) \implies |x|\le r \bigr\}\]
and  $D_{m,r}= D_m(E_m(r))$. 
The analysis of the leading terms
of $h_m$ shows that  $D_{m,r}$ contains a ball of radius $\sim r$ centered at the origin,
while  the definition of $D_{m,r}$ implies that it
is contained in the larger ball of radius $r$.

Finally, Theorem~\ref{Thm:nearfixedpoint} implies that a trajectory of $f^n$ that starts in $D_{m_r,r/2}$ needs  at least $\exp(-C/r^2)$ iterates to increase the energy and leave $D_{m_r,r}$.
In the $2d$-dimensional case with $d=1$ (e.g. in the Hénon map) the stability may be permanent
as the invariant KAM curves separate the phase space.
We note that convexity of level sets is sufficient but not necessary condition for stability. For example,  near the resonant point of
the Hénon map the function $h_m$ is not convex but nevertheless 
the corresponding  level lines are closed and prevent trajectories
from escaping (see Figure~\ref{Fig:levelsH}).
If $d>1$ the permanent stability does not follow from the existence
of an adiabatic invariant and an elliptic fixed point can be unstable~\cite{FMS2020}.

\subsection{Hidden symmetries of  interpolating flows\label{Se:symmetries}}

Without loosing in generality, we may place the fixed point of a map at the origin, then $f(0)=0$.
The point is called fully resonant if the Jacobian $f'(0)$ is a root of unity. Then $f^n$,
the $n$-th iterate of the map, is tangent to identity:
\[
(f^n)'(0)=\bigl(f'(0)\bigr)^n=I.
\]

It is well known that there is a unique formal vector field $X$ such that 
the formal series for the time-$n$ flow defined by $X$ coincide
with $f^n$, i.e. 
\(\Phi^n_X = f^n.\)
For example, this claim can be deduced from Theorem~\ref{Thm:formal}.
If $f$ is symplectic, then $X$ is Hamiltonian and obviously 
\(
h_X\circ f^n= h_X\circ \Phi^n_X=h_X.
\)
We show that the vector field $X$ and the Hamiltonian $h_X$ are
not only $f^n$-invariant but also $f$-invariant.

We called this symmetry hidden to stress the fact that the vector 
field computed using $f^n$, the $n$-th iterate of the map $f$, is invariant under the original map $f$.
We note that an interpolating vector field $X_m$ of finite order 
share a common jet with the formal interpolator. Therefore the following lemma
implies that $X_m$ possesses the symmetry up to an error term.

\begin{lemma}[symmetries of the formal interpolating vector field]
\label{Le:symmetry}
Let $f$ be a formal diffeomorphism with $f(0)=0$ such that $f^n $ is tangent to the identity 
for some $n\in\N$,  and let $X$ be the formal
vector field such that $f^n=\Phi^n_X$. Then the vector field $X$ is $f$-invariant, i.e.,
\begin{equation}\label{Eq:fsymm}
X\circ f = D(f) X,
\end{equation}
where $D(f)$ is the Jacobian matrix of $f$.
Moreover, if $f$ is symplectic, then $X$ is Hamiltonian with a formal Hamiltonian $h_X$
and 
\begin{equation}\label{Eq:h-f-invar}
h_X\circ f=h_X.
\end{equation}
\end{lemma}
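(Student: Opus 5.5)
The plan is to use uniqueness of the formal interpolating vector field together with the naturality of flows under conjugation. Consider the pushed-forward vector field
\[
Y=f_*X:=\bigl(D(f)X\bigr)\circ f^{-1},
\]
which is a well-defined formal vector field because $f$ is a formal diffeomorphism fixing the origin, so $f^{-1}$ exists as a formal series and composition is legitimate. Its formal flow is the conjugate of the flow of $X$, namely $\Phi^t_Y=f\circ\Phi^t_X\circ f^{-1}$. This holds at the level of formal series: both sides solve the same formal ODE with the same initial condition, or equivalently one compares the Lie series $\exp(tD_Y)$ and $f\circ\exp(tD_X)\circ f^{-1}$ term by term. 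Taking $t=n$ gives
\[
\Phi^n_Y=f\circ f^n\circ f^{-1}=f^n .
\]

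The key step is uniqueness. We need that a formal vector field $X$ with $\Phi^n_X=f^n$ is unique among vector fields vanishing at the origin to the appropriate order. Since $f^n$ is tangent to the identity, $X(0)=0$ and $DX(0)=0$; the same holds for $Y$, because $Y(0)=D(f)(0)X(0)=0$ and $DY(0)=A_0\,DX(0)\,A_0^{-1}=0$, where $A_0=f'(0)$.

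Uniqueness should follow from the recursion in the proof of Theorem~\ref{Thm:formal}. To put the problem in that setting, introduce the scaling $x=\varepsilon u$. The order-$(k+1)$ homogeneous part of the map then becomes the $\varepsilon^k$ coefficient, and $\Phi^n_X$ becomes $\Phi^1_{nX}$. The recursion $g_k=f_k-\sum(\dots)$ then determines each homogeneous component of $nX$ uniquely from the lower ones. Alternatively, one can argue directly by degree: if $X\neq Y$ have the same flow, let $j\ge 2$ be the lowest degree at which they differ. Then $\Phi^n_X-\Phi^n_Y$ has degree-$j$ part $n(X_j-Y_j)\neq0$, since the nonlinear terms $D_X^{k}X$ only involve degrees $\ge j+1$ at the first difference. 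This contradicts equality of the flows, so $Y=X$, which is exactly \eqref{Eq:fsymm}.

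For the symplectic case, the plan is to show that $X$ is Hamiltonian by the same degree induction. The flow of the Hamiltonian truncation is symplectic, so at the first degree $j$ where $X$ might fail to be Hamiltonian, the symplecticity of $f^n$ forces $n$ times the degree-$j$ part of $X$ to be a locally Hamiltonian, hence globally Hamiltonian, polynomial field. Alternatively, we can invoke the normal form statement quoted above, giving $X=J\nabla h_X$ with $h_X(0)=0$ and $h_X$ starting at order $\ge 3$. Then, because $f$ is symplectic, $f_*X$ is Hamiltonian with Hamiltonian $h_X\circ f^{-1}$. From $f_*X=X$ we get $\nabla(h_X\circ f^{-1}-h_X)=0$, so $h_X\circ f^{-1}-h_X$ is a constant. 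Evaluating at the fixed point $0$ shows the constant is zero, which gives \eqref{Eq:h-f-invar}.

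I expect the main obstacle to be making the formal-series steps rigorous: the conjugation identity for formal flows, and uniqueness when $f'(0)\neq I$, which requires checking that $f_*X$ still has vanishing linear part. These are routine once the problem is phrased through the degree filtration.
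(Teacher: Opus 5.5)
Your proof is correct, and it takes a more direct route than the paper's.

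\textbf{The paper's route.} The paper conjugates $f$ by a formal change of coordinates $C$ to a normal form $g$ that commutes with $L_0=g'(0)$. It builds $Y$ with $\Phi^1_Y=L_0^{-1}\circ g$ and argues that $Y$ commutes with $L_0$. From this it deduces that $\Phi^t_Y$ commutes with $g$ and that $Y$ interpolates $g^n$, and then pulls back by $C$ using uniqueness.

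\textbf{Your route.} You observe that $f_*X$ has vanishing linear part and time-$n$ map $f\circ f^n\circ f^{-1}=f^n$, so uniqueness gives $f_*X=X$ immediately.

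\textbf{What each buys.} Your version avoids the normal form theorem, which the paper uses as a black box. In fact, the paper's intermediate step ``$L_0\circ\Phi^1_Y=\Phi^1_Y\circ L_0$ implies $L_0\circ Y=Y\circ L_0$'' is exactly your argument with $L_0$ in place of $f$, so you simply apply it to $f$ directly. The paper's conclusion that $\Phi^t_X\circ f=f\circ\Phi^t_X$ for all $t$ also follows from yours, since $f\circ\Phi^t_X\circ f^{-1}=\Phi^t_{f_*X}$. For the symplectic case, you supply an argument that $X$ is Hamiltonian, which the paper only asserts. Your degree induction works. An equivalent one-line version: $\exp(nL_X)\omega=\omega$, and $L_X$ raises degree, so $\exp(nL_X)-I$ equals $nL_X$ times an invertible operator, forcing $L_X\omega=0$. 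What the paper's route gives in addition is the identification $\Phi^t_X=C^{-1}\circ\Phi^t_Y\circ C$ with the normal-form flow. This links $h_X$ to the normal form Hamiltonian $h_Y$ and its $A_0^k$ symmetry used in Section~\ref{Se:stability}.

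\textbf{One correction.} $DX(0)=0$ does not follow from tangency of $f^n$. For example, if $f$ is a linear map with $f^n=\id$, any linear field $X(x)=Ax$ with $\exp(nA)=I$ satisfies $\Phi^n_X=f^n$. If $A$ does not commute with $f$, the lemma even fails for that $X$. The condition $X=O(|x|^2)$ is the normalization that makes ``the'' formal vector field unique; it is the field produced by Theorem~\ref{Thm:formal}. State it as an assumption, not a consequence. Once it is imposed, your check that $D(f_*X)(0)=A_0\,DX(0)\,A_0^{-1}=0$ is exactly what the uniqueness step needs.
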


\begin{proof}
In order to check the symmetry, we note that 
the normal form theory states that there is a formal change of coordinates $C$ with $C(0)=0$ 
that transforms the map $f$ to its normal form $g$, i.e., $C \circ f = g \circ C$ and 
$g$ has the symmetry
\begin{equation}\label{Eq:gL0}
g \circ L_0 = L_0 \circ g
\end{equation}
where $L_0$ is the	linear map defined by $g'(0)$, the Jacobian of $g$ at the origin.
Since $L_0^{-1}\circ g$ is tangent-to-identity,
there is a unique formal vector field $Y$ such that
\[
\Phi_Y^1 = L_0^{-1} \circ g.
\]
Substituting this equality into \eqref{Eq:gL0} we  get 
\[
L_0\circ \Phi^1_Y=\Phi^1_Y\circ L_0
\]
and consequently
\[
L_0\circ Y=Y\circ L_0.
\]
The last conclusion may be proved using that discrete averaging 
preserves  linear symmetries. Alternatively, the normal form theory
 can be used as the formal normal form $g$ contains  
 resonant terms only and the symmetry of $Y$ follows from the symmetry of monomials in the corresponding series. 
 
The symmetry of $Y$ implies that for any $t$
\[
\Phi_Y^t \circ  L_0=L_0 \circ  \Phi_Y^t.
\]
Consequently
\(
\Phi_Y^t\circ g = \Phi_Y^t\circ \Phi^1_Y\circ L_0=L_0\circ \Phi_Y^1\circ \Phi^t_Y=g\circ \Phi_Y^t.
\)
Therefore
\[
\Phi_Y^t\circ g =g\circ \Phi_Y^t.
\]
Obviously $g^n$ is tangent-to-identity.
Since $L_0$ and $g$ commute
\[
\Phi_Y^n=(L_0^{-1}\circ g)^n=L_0^{-n}\circ g^n=g^n.
\]
Therefore $Y$ is the formal interpolating vector field for $g^n$.
Its uniqueness implies that
\[
\Phi_X^t=C^{-1}\circ \Phi_Y^t\circ C.
\]
Then we get,
\[
\Phi_X^t\circ f=C^{-1}\circ \Phi_Y^t\circ C\circ C^{-1}\circ g\circ C=
C^{-1}\circ g \circ  \Phi_Y^t\circ C=f\circ \Phi_X^t.
\]
We have proved that $f$ commutes with the formal flow: for all $t$
\[
\Phi_X^t \circ f=f\circ \Phi_X^t.
\]
Taking the derivative with respect to $t$ at $t=0$ we see that  the vector field $X$ is invariant under iterates of $f$ and \eqref{Eq:fsymm} holds. 
In the case of a symplectic $f$, the vector field $X$ is Hamiltonian and  it follows that 
\[
  h_X \circ f - h_X=\mathrm{const}.
\]
Finally, $f(0)=0$ implies that the constant is zero and \eqref{Eq:h-f-invar} follows immediately.
\end{proof}

Note that  a family of maps $f_p(x)=f(x,p)$ that unfolds a fully resonant fixed point
can be considered as the map $(x,p)\mapsto (f(x,p),p)$
 in the extended phase space. For example,
this argument justifies the application of Lemma~\ref{Le:symmetry}
and the theory of discrete averaging of Section~\ref{Se:fixedpoint} to the conservative  Hénon map in Section~\ref{Se:henon}.

\section{Conclusions}

In this paper we developed the theory of discrete averaging that 
uses weighted averages of the iterates of $f$ to construct  autonomous vector fields 
$X_m$ such that the corresponding time-one flows $\Phi^1_{X_m}$
approximate the map. The construction is explicit and can be used to analyze long term
stability for discrete time dynamical systems. Moreover, if necessary, it
 can be easily implemented in form of a numerical scheme to produce both numerical
 approximations and analytic expressions for vector fields and adiabatic invariants.

The discrete averaging provides an alternative to the traditional approach based 
on the combination of the classical averaging and normal form theory. 
In contrast to the traditional approach the discrete averaging does not
rely neither on the suspension procedure nor on coordinate changes.

We showed that the discrete averaging substantially simplifies
 analysis of the accuracy for averaging procedures. In the analytic case, we provided  explicit  and uniform bounds for  approximation errors.

\section*{Acknowledgements}
A.V. is supported by the Spanish grant PID2021-125535NB-I00 funded by 
MICIU/AEI/10.13039/501100011033 and by ERDF/EU.

\bibliographystyle{plain}

\end{document}